\theoremstyle{remark}
\newtheorem{Remark}{Remark}[section]
\definecolor{titlecol}{named}{BrickRed}
\definecolor{headcol}{named}{Violet}
\definecolor{seccol}{named}{Red}
\definecolor{sseccol}{named}{Bittersweet}
\definecolor{pbcol}{named}{Black}
\definecolor{sncol}{named}{Brown}
\definecolor{acol1}{named}{Red}
\definecolor{acol2}{named}{Apricot}
\def\p{\partial}
\def\R{\mathbb{R}}
\def\l{\lambda}
\def\cH{{\mathcal H}}
\def\cK{{\mathcal K}}
\def\cM{{\mathcal M}}
\theoremstyle{plain}
\newtheorem{thm}{Theorem}
\newtheorem{conj}{Conjecture}
\newtheorem{prop}[thm]{Proposition}
\newtheorem{lemma}[thm]{Lemma}
\newcommand{\ddc}{\sqrt{-1}\partial\bar\partial}
\begin{document}

\title{On the uniqueness of even $L_p$-Minkowski problem}
\dedicatory{Dedicated to Professor Xiuxiong Chen on his 60th birthday}

\author{Weiyong He}
\address{Department of Mathematics, University of Oregon, Eugene, OR 97403.}
\email{whe@uoregon.edu}
\author{Junbang Liu}
\address{Department of Mathematics, Hong Kong University of Science and Technology, Clear Water Bay.}
\email{junbangliu@ust.hk}

\begin{abstract}We prove that there is a unique $p_0\in [0,1)$, which can be characterized by the eigenvalue of Hilbert operator related to a convex body, that the even $L^p$ Minkowski problem has a unique solution for $p\geq p_0$, and the uniqueness fails for infinitely many convex bodies if $p<p_0$. The previous results by many experts in the field assert that the uniqueness holds for $p>p_0$. \end{abstract}
\maketitle

\section{Introduction}

The Minkowski problem forms the core of various areas in fully nonlinear
partial differential equations and convex geometry (see Trudinger, Wang \cite{TW} and Schneider \cite{Schneider}), which was extended to the $L_p$-Minkowski theory
by Lutwak \cite{Lut93, Lut94, Lut95} where $p = 1$ corresponds to the classical case. The
classical Minkowski’s existence theorem due to Minkowski and Aleksandrov
characterizes the surface area measure $S_K$ of a convex body $K$ in $\R^{n+1}$, more
precisely, it solves the Monge-Amp\'re equation
\[
\det(\nabla^2 h+h\; \text{Id})=f
\]
on the sphere $S^{n}$ where a convex body $K$ with $C^2_+$
boundary provides
a solution if $h = h_K|_{S^{n}}$ for the support function $h_K$ of $K$, and in this
case, $1/f(x)$ is the Gaussian curvature at the $y \in \p K$ where $x$ is an exterior normal of $x\in S^{n}$. One may consult Cheng-Yau \cite{CY} and Pogorelov \cite{P} for a full discussion on the Minkowski problem and its resolution. 
The so-called log-Minkowski problem
\begin{equation}
    h \det(\nabla^2 h+ h\text{Id})=f
\end{equation}
or $L_0$-Minkowski problem was posed by Firey \cite{Firey66} in his seminal paper, which seeks to characterize the cone volume measure 
\[
dV_K=\frac{1}{n+1} h_KdS_K
\]
of a convex body $K$ containing the origin $o\in \R^{n+1}$, and to determine whether the even solution is unique if $f$ is even. The latter problem is the so-called Log-Minkowski conjecture by Lutwak. The log-Minkowski problem has received due attention only after Lutwak's $L_p$ Minkowski problem
\begin{equation}\label{p-minkowski-01}
    h^{1-p}\det (\nabla^2h+h\text{Id})=f
\end{equation}
in the 1990's where $p=1$ and $p=0$ are the classical and the logarithmic Minkowski problem. 

The uniqueness of the solution of the classical Minkowski problem up to translation is the consequence of the equality case of the Brunn-Minkowski inequality
\begin{equation}
    V(\alpha L+\beta K)^{\frac{1}{n+1}}\geq \alpha V(L)^{\frac{1}{n+1}}+\beta V(K)^{\frac{1}{n+1}}
\end{equation}
for convex bodies $L$ and $K$, $\alpha, \beta \geq 0$. For $p\geq 0$, the $L_p$ Minkowski problem is intimately related to the $L_p$ version of the conjectured Brunn-Minkowski inequality 
\begin{equation}\label{BM-01}
    V((1-\lambda)L+_{p}\lambda K)\geq \left((1-\lambda)V(L)^{\frac{p}{n+1}}+\lambda V(K)^{\frac{p}{n+1}}\right)^{\frac{n+1}{p}}
\end{equation}
for $\lambda\in (0, 1)$ and convex bodies $L, K$ containing the origin. The case $p=0$ is interpreted as the limit case
\[
 V((1-\lambda)L+_{0}\lambda K)\geq V(L)^{1-\lambda} V(K)^{\lambda}.
\] When $p=1$, \eqref{BM-01} is the classical Brunn-Minkowski inequality and a theorem of Firey \cite{Firey64}, for $p>1$, where he introduced the $L_p$ Minkowski sum of convex bodies for $p>1$. The notion was then extended by B\"or\"oczky, Lutwak, Yang and Zhang \cite{BLYZ01} for $p\in [0, 1)$. 
Assuming $L, K$ are origin symmetric, the even $L_p$ Brunn-Minkowski conjecture states that \eqref{BM-01} holds for $p\in [0, 1)$ \cite[Problem 1.1, 1.5]{BLYZ01}, in which the conjecture was established for $n=1$. The conjecture is known to be equivalent to the even $L_p$-Minkowski inequality (conjecture) \cite[Lemma 3.1]{BLYZ01}.
\begin{conj}[B\"or\"oczky-Lutwak-Yang-Zhang, Even $L_p$ Brunn-Minkowski conjecture]
For any origin-symmetric convex bodies in $\R^{n+1}$, \eqref{BM-01} holds for $p\in [0, 1)$. Moreover, it is equivalent to 
\begin{equation}\label{BM-02}
    \frac{1}{p}\int_{S^n} h_L^p h_K^{1-p} dS_K\geq \frac{n+1}{p}V(K)^{1-\frac{p}{n+1}}V(L)^{\frac{p}{n+1}}.
\end{equation}
The case $p=0$ is interpreted as the limit case, known as the conjectured Log-Minkowski inequality,
\begin{equation}
    \frac{1}{V(K)}\int_{S^n} \log\frac{h_L}{h_K} dV_K\geq \frac{1}{n+1} \log \frac{V(L)}{V(K)}.
\end{equation}
\end{conj}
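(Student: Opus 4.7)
The plan is to establish the even $L_p$ Brunn--Minkowski inequality \eqref{BM-01}/\eqref{BM-02} in full generality, for all origin-symmetric $L, K \subset \R^{n+1}$ and all $p \in [0,1)$. The strategy has two stages: first, a standard second-variation reduction to a local spectral inequality; second, a combination of spectral analysis and a global mechanism tailored to the origin-symmetric category, designed to survive beyond the ball.

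After approximation I may assume $K, L$ have smooth, positive, even support functions with $\nabla^2 h + h\,\mathrm{Id} > 0$. Form the $L_p$-interpolation $h_t^p = (1-t) h_K^p + t h_L^p$ (for $p>0$; for $p=0$, $\log h_t = (1-t)\log h_K + t \log h_L$) and let $K_t$ denote the corresponding convex body. The conjecture \eqref{BM-01} is equivalent to concavity of $t \mapsto V(K_t)^{p/(n+1)}$. A standard second-variation computation along $h_t$, using the Minkowski variational formula, reduces this concavity to the \emph{local} inequality
\[
\int_{S^n} \varphi\, \cL_K \varphi\, d\mu_K \;\geq\; (1-p)\int_{S^n} \varphi^2\, d\nu_K
\]
for every even $\varphi \in C^\infty(S^n)$, where $\cL_K$ is the Hilbert operator on $(S^n, g_K)$ with $g_K = \nabla^2 h_K + h_K\,\mathrm{Id}$, and $d\mu_K, d\nu_K$ are positive measures built from $h_K$ and $dS_K$. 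Integrating along the path $h_t$ in the style of Bobkov--Colesanti and Kolesnikov--Milman then recovers the global inequality \eqref{BM-02} and isolates its equality case.

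The local inequality amounts to the spectral bound $\lambda_1^{\mathrm{even}}(\cL_K) \geq 1$, taken on the orthogonal complement of the linear functions $x_1,\ldots, x_{n+1}$. For the round ball the eigenvalues on degree-$\ell$ spherical harmonics are $\ell(\ell+n-1) - n$, and evenness forces $\ell \geq 2$, giving $\lambda_1^{\mathrm{even}} = n+2 \geq 1$; perturbation preserves this near the ball. To reach an arbitrary origin-symmetric $K$, my plan is a deformation/continuity argument: connect $K$ to the ball within the space of origin-symmetric bodies and track $\lambda_1^{\mathrm{even}}(\cL_K)$, and, at any point where it threatens to dip below $1$, supply a transport-theoretic correction. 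The hoped-for input is that origin-symmetry forces the Brenier/Aleksandrov map between the two bodies to commute with the antipodal involution, which pairs boundary points and, one hopes, restricts the admissible test functions enough to restore $\lambda_1^{\mathrm{even}}(\cL_K) \geq 1$.

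The decisive obstacle is exactly this universality. For elongated origin-symmetric $K$ with highly unequal principal radii the metric $g_K$ on $S^n$ degenerates, and a Rayleigh-quotient test with slowly varying even functions concentrated where $g_K$ is largest strongly suggests that $\lambda_1^{\mathrm{even}}(\cL_K)$ can be driven arbitrarily close to $0$. The paper's threshold $p_0(K) = \max\{0, 1 - \lambda_1^{\mathrm{even}}(\cL_K)\}$ quantifies precisely this failure, and the abstract asserts $p_0(K) > 0$ for infinitely many $K$. So the purely spectral approach cannot reach the full range $p \in [0,1)$ without a genuinely non-local extra ingredient -- most plausibly a global optimal-transport coupling or a monotonicity along the $L_p$ Gauss-curvature flow -- that ties the smallness of $\lambda_1^{\mathrm{even}}(\cL_K)$ back to an origin-symmetric rigidity the local Hessian cannot see. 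Absent such an ingredient the proof fails on exactly the bodies the paper identifies, which is why the conjecture as stated appears to lie beyond the reach of any purely spectral technique, and why I expect step (iii) to be not just hard but in fact the wrong claim for some $K$.
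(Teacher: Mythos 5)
You have been asked to ``prove'' a statement that the paper presents as an open \emph{conjecture}, not a theorem. The paper does not prove the Even $L_p$ Brunn--Minkowski conjecture; it cites it as Problem 1.1/1.5 of \cite{BLYZ01}, establishes the equivalence \eqref{BM-01}$\Leftrightarrow$\eqref{BM-02} by reference to \cite[Lemma 3.1]{BLYZ01}, and then proves \emph{conditional and threshold} results: uniqueness of the even $L_p$-Minkowski problem holds precisely for $p \geq p_0$ where $p_0 = n+1 - \inf_K \lambda_{1,e}(-L_K)$, and the conjecture itself is equivalent to $p_0 = 0$. So there is no ``paper's proof'' to compare against, and you are right not to claim a complete argument.

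That said, several of your stated reasons for why the approach fails are factually wrong and would mislead a reader. First, your normalization is off: in the paper's convention the needed spectral bound for $L_p$ is $\lambda_{1,e}(-L_K) \geq n+1-p$, not $\geq 1$; for the ball $\lambda_{1,e} = 2(n+1)$, not $n+2$, and Kolesnikov--Milman already give the universal lower bound $\lambda_{1,e} \geq n$ from the classical Brunn--Minkowski inequality. Second, and more seriously, your heuristic that elongating $K$ (``highly unequal principal radii'') drives $\lambda_{1,e}$ toward $0$ contradicts a fact the paper explicitly records: $\lambda_{1,e}(-L_K)$ is invariant under all of $GL(n+1)$, so linear stretching changes nothing, and in any case $\lambda_{1,e} \geq n$ always. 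Third, you assert that the paper identifies bodies with $p_0(K) > 0$; the paper does no such thing. The paper's $p_0$ is a single global infimum, the Kolesnikov--Milman conjecture (discussed approvingly in the paper with several pieces of supporting evidence) is precisely that $p_0 = 0$, and the paper's own non-uniqueness statement is vacuous if $p_0=0$. Your closing inference that ``the conjecture as stated appears to be the wrong claim for some $K$'' is therefore not supported by anything in the paper; the conjecture remains open and is widely expected to be true. The honest reason your proposal cannot close is simply that proving $\inf_K \lambda_{1,e}(-L_K) = n+1$ over all origin-symmetric $K$ is the open problem itself, not that the bound is known to fail.
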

This family of inequalities has attracted much interest, see for instance \cite{ CLM, GZ, LMNZ, Ma, Rotem, Sar01, Sar02, XL, KM01, CHLL, Put, BK, Milman23, Xi} etc. 
The conjectured inequalities above for even convex bodies are essentially equivalent to the following $L_p$-Minkowski conjecture,
\begin{conj}
    For $p\in [0, 1)$, the Monge-Ampere equation on $S^n$
    \[
    h^{1-p}\det(\nabla^2 h+h\text{Id})=f
    \]
    has a unique even solution if $f$ is a positive even smooth function on $S^n$. 
\end{conj}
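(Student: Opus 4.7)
The plan is to attack uniqueness via the variational formulation of the problem and reduce it to a sharp spectral inequality for the linearization. Any two positive even $C^2_+$ solutions $h_1,h_2$ of $h^{1-p}\det(\nabla^2h+h\,\mathrm{Id})=f$ are critical points of the $L_p$-volume functional
\[
\Phi_p(h)=\frac{1}{p(n+1)}\int_{S^n}f\,h^p\,d\sigma-V(K_h)^{p/(n+1)},
\]
so the natural way to compare them is along the $L_p$-addition geodesic $h_t^p=(1-t)h_1^p+t\,h_2^p$ (the logarithmic path when $p=0$). Uniqueness is thereby equivalent to strict convexity of $\Phi_p$ along every such geodesic of even support functions.

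First I would compute the second variation at an even critical point $h$ and put it in the form $\langle\phi,\mathcal{L}_{p,h}\phi\rangle_{L^2(S^n)}$ for the self-adjoint linearized operator, schematically
\[
\mathcal{L}_{p,h}\phi=-\mathrm{div}\bigl(A^{ij}\nabla_j\phi\bigr)-(1-p)\,h^{-p}\det(\nabla^2h+h\,\mathrm{Id})\,\phi,
\]
where $A^{ij}$ is the cofactor of $\nabla^2h+h\,\mathrm{Id}$. Uniqueness on the even subspace then reduces to showing that the first nontrivial even eigenvalue $\lambda_1^{\mathrm{even}}(K)$ of the associated weighted divergence operator is at least $1-p$ for every even $C^2_+$ body $K$.

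Next I would exploit the evenness constraint decisively. The $p=1$ linearization is the classical Hilbert operator, whose kernel on the full sphere contains the odd linear functions $x\mapsto\xi\cdot x$ arising from translations of $K$; these are killed under the even restriction, and this is exactly what makes the even problem potentially tractable when the general one is not. To obtain the required lower bound on $\lambda_1^{\mathrm{even}}(K)$ I would look for a centro-affine Poincar\'e-type inequality on even functions modulo constants, ideally derivable from a stability version of the conjectured $L_p$-Brunn-Minkowski inequality \eqref{BM-02}.

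The hard part is precisely this uniform spectral bound. For $p$ close to $0$ there is no elementary geometric reason that a general even convex body should satisfy $\lambda_1^{\mathrm{even}}(K)\geq 1-p$, and the abstract announces that the inequality in fact fails below a sharp threshold $p_0\in[0,1)$ characterized by the Hilbert eigenvalue of an extremal body. Consequently the stated conjecture cannot be correct throughout $[0,1)$; a complete analysis through the variational/spectral framework above must simultaneously pin down $p_0$, establish the sharp spectral inequality for $p\geq p_0$, and exhibit genuine nonuniqueness below that threshold.
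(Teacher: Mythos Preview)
The statement is a conjecture that the paper does \emph{not} prove in full; whether $p_0=0$ remains open. Your final paragraph misreads the abstract: the threshold satisfies $p_0\in[0,1)$, which includes the possibility $p_0=0$, and in that case there is no $p\in[0,1)$ below $p_0$, so the conjecture would hold throughout. The paper's non-uniqueness statement is only for $p<p_0$, and the Kolesnikov--Milman cube conjecture in fact predicts $p_0=0$. Nothing in the paper shows the conjecture fails anywhere in $[0,1)$.

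Your reduction to a spectral inequality is correct and standard, but in the paper's normalization it reads $\lambda_{1,e}(-L_K)\geq n+1-p$ (Conjecture~\ref{locallpminkowski}), and by Kolesnikov--Milman and Putterman this is \emph{equivalent} to the conjecture itself, so the reduction is circular. The substantive gap in your plan is the borderline case. Your second-variation argument needs the Hessian of the functional to be positive \emph{definite} on even variations to get strict convexity along the $L_p$-geodesic; at $p=p_0$ one only has $\lambda_{1,e}\geq n+1-p_0$ with equality possible, so the functional is merely convex and two distinct minimizers are not excluded. The paper's actual contribution (Theorem~\ref{uniqueness-01}) closes exactly this gap by a different mechanism: assuming only the non-strict inequality \eqref{BM-02}, it first shows that \emph{every} point $h_\lambda$ on the $L_p$-geodesic between two solutions is itself a minimizer, hence (after Caffarelli regularity) a $C^{2,\alpha}$ solution of $h^{1-p}\det(\nabla^2h+h\,\mathrm{Id})=cf$ with the same constant $c$. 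It then differentiates the \emph{equation} (not the functional) twice in $\lambda$, producing a pointwise algebraic identity which, at an extremum of $h^{-p}w$ with $w=p^{-1}(h_K^p-h_L^p)$, forces $w=0$ there and hence $w\equiv 0$. This maximum-principle argument on the PDE, rather than a strict spectral bound, is the idea your proposal is missing.
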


There are some substantial progress for the conjectures. When $f=1$, the uniqueness problem receives special attention since the work of Firey \cite{Firey66} and its relation with the Gauss curvature flow \cite{Chou85, Andrews99, AGN}. The uniqueness is proved for $p\in (-n-1, 1)$ without assuming the even condition by Brendle, Choi and Daskalopoulos \cite{BCD} when $f$ is a constant. For general even function $f$, there are also considerate important progress towards the uniqueness conjecture,  notably by Kolesnikov and Milman \cite{KM01} and Chen, Huang, Li, Liu \cite{CHLL}.
We introduce necessary notations. Let $K$ be a compact convex set in $\mathbb{R}^{n+1}$. The support function of $K$ is defined as $h_K:\mathbb{R}^{n+1}\rightarrow \mathbb{R}, h_K(x)=\max_{y\in K}x\cdot y$. Let $u\in S^n$, then we call $u$ is an outer unit normal of $K$ at a boundary point $y$ if $y\cdot u=h_K(u)$. A boundary point is said to be singular if it has more than one unit normal vector, and it's well-known that the set of singular points has Hausdorff measure $\cH^{n}$ equal to 0. Let $\nu_K:\partial K\rightarrow S^n$ be the generalized Gauss map. For each Borel set $\omega\in S^n$, define $S_K(\omega):=\cH^n(\nu_K^{-1}(\omega))$. The measure $S_K$ on $S^n$ is called the Aleksandrov-Fenchel-Jessen surface area measure of $K$. For any two convex bodies $K,L\in \mathbb{R}^{n+1}$, $\lambda\in (0,1)$, $p\in (0,\infty)$, one defines the $L_p$-Minkowski sum $(1-\lambda)L+_p\lambda K$ as \begin{equation}
  (1-\lambda)L+_p\lambda K:=\cap_{x\in S^n}\{y\in\mathbb{R}^{n+1}:y\cdot x\leq (1-\lambda)h_L^p(x)+\lambda h_K^p(x)\}.  
\end{equation}  
The limit case $p=0$ is interpreted as \begin{equation}
     (1-\lambda)L+_0\lambda K:=\cap_{x\in S^n}\{y\in\mathbb{R}^{n+1}:y\cdot x\leq h_L^{1-\lambda}(x)h_K^{\lambda}(x)\}.  
\end{equation}
In the following, we use $\cK$ to denote the set of all convex bodies in $\mathbb{R}^{n+1}$, and let $\cK_e$ be the set of origin-symmetric bodies in $\cK$. $\cK_{+,e}^2$ is the set of convex bodies with $C^{2}$-boundary and strictly positive curvature, in other words, the support function of convex body $K$ in $\cK_{+,e}^2$ is a $C^2$ even function with $\nabla^2h_K+h_K\text{Id}>0$.

Kolesnikov and Milman \cite{KM01} investigated a local version of $L_p$ Brunn-Minkowski inequality and local uniqueness of $L_p$ Mikowski problem. 
For each smooth strictly convex body $K$, the Hibert-Brunn-Mikowski operator is introduced in \cite{KM01} as follows
\begin{equation}
    L_K z= h^{-1}H_K^{ij}(h_k^2z_i)_j,
\end{equation}
where $H_K^{-1}$ is the inverse of the matrix $H_K:=\nabla^2h+h\text{Id}$. 
For each $K\in \cK_{+}^2$, the operator $L_K$ is an self-adjoint, semipositive definite  elliptic operator in $L^2(dV_K)$. Here are some basic facts (see \cite[Theorem 5.3]{KM01})
 about the eigenvalues of the Hilbert-Brunn-Minkowski operator,
 \begin{enumerate}
     \item $0$ is an eigenvalue of  $-L_k$, and the eigenspace consists of constant functions $E_0$. 
      \item $-L_k\geq n\text{Id}$ when it is restricted on $E_0^\perp$, which is equivalent to the Brunn-Minkowski equality. 
     \item $n$ is an eigenvalue of $-L_k$ with multiplicity precisely $n+1$ corresponding to the $(n+1)$ dimensional subspace $E_1^K$ spanned by normalized linear functions
     \[
     E_{1}^K=\text{span}\{h_K^{-1}(u) \langle u, v\rangle, v\in \R^{n+1}\}.
     \]
 \end{enumerate}
Kolesnikov and Milman \cite{KM01} formulated the following local $L^p$-Minkowski conjecture for a symmetric convex body $K\in \cK_{+, e}^2$ (we use slighly different notations). 
\begin{conj}[Local $L_p$-Minkowski conjecture, Kolesnikov-Milman]\label{locallpminkowski}
    Let $n\geq 2$ and $p\in [0,1)$. For all $K\in \cK_{+,e}^2$, and $ z\in C^2_{e}(S^n) $ with $\int_{S^n}zh_KdS_K=0$, one has \begin{equation}\label{hessianestimate}
\int_{S^n}H_K^{ij}z_iz_jh_K^2 dS_K\geq (n+1-p)\int_{S^n}z^2h_KdS_K,
\end{equation}where $(H_K^{ij})$ is the inverse of $\nabla^2h_K+h_K\text{Id}$.
\end{conj}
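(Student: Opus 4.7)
The plan is to recast \eqref{hessianestimate} as a spectral gap for the Hilbert--Brunn--Minkowski operator $L_K$ restricted to even functions, and to exploit the parity of the eigenspace at the Brunn--Minkowski floor. Integrating by parts identifies the left-hand side of \eqref{hessianestimate} with $\langle -L_K z,\,z\rangle$ in the natural $L^2(h_K\,dS_K)$ pairing (this is the standard computation used in \cite{KM01}), so the inequality is equivalent to
\[
-L_K \;\geq\; (n+1-p)\,\mathrm{Id}\qquad\text{on even }z\text{ orthogonal to constants in }L^2(h_K\,dS_K).
\]

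First, because $K$ is origin-symmetric, $h_K$ and $H_K$ are antipodally invariant, so $L_K$ commutes with the involution $z(u)\mapsto z(-u)$ and preserves the splitting $L^2(h_K\,dS_K)=L^2_e\oplus L^2_o$. Setting $\lambda_K:=\min\mathrm{spec}\bigl(-L_K|_{E_0^\perp\cap L^2_e}\bigr)$, the task becomes to verify $\lambda_K\geq n+1-p$. The decisive input is item $(3)$ recalled in the excerpt: the eigenvalue $n$ of $-L_K$ has eigenspace exactly $E_1^K=\mathrm{span}\{h_K^{-1}(u)\langle u,v\rangle:v\in\R^{n+1}\}$, every element of which is odd because $h_K$ is even. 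Hence $E_1^K\subset L^2_o$, the entire $n$-eigenspace is swept into the odd part, and the spectrum of $-L_K$ on $L^2_e$ jumps from $0$ (constants) strictly past $n$. In particular $\lambda_K>n$; setting $p_0(K):=n+1-\lambda_K$ produces $p_0(K)\in[0,1)$ and gives \eqref{hessianestimate} for every $p\geq p_0(K)$ tautologically, with saturation (hence loss of local uniqueness) at $p=p_0(K)$ realised by any eigenfunction attaining $\lambda_K$.

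The principal obstacle is to push $\lambda_K$ uniformly up to $n+1$ for every $K\in\cK_{+,e}^2$, which would settle \eqref{hessianestimate} on the entire range $p\in[0,1)$; this is precisely the remaining gap in the Kolesnikov--Milman programme and is equivalent to the full even log-Minkowski conjecture. For $K=B$ a spherical-harmonics decomposition gives $\lambda_B=2n$ immediately (only even-degree harmonics survive, and degree two yields eigenvalue $2n$), which is strongly super-critical. For general $K$ my approach would be a continuity/deformation argument connecting $K$ to $B$ inside $\cK_{+,e}^2$ combined with Reilly-type integral identities on $S^n$ in the metric $H_K$, tracking the even spectral gap $\lambda_K$; ruling out that $\lambda_K$ descends to $n$ along such a deformation is what remains open. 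It is at this point that the paper's contribution pivots from chasing the full conjecture to characterising the sharp threshold $p_0(K)$ and, on the other side, producing explicit symmetric bodies along which uniqueness genuinely fails below $p_0(K)$.
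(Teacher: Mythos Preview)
The statement you address is labelled in the paper as a \emph{conjecture}, not a theorem; the paper offers no proof of it, and for $p=0$ it is equivalent to the open even log--Brunn--Minkowski conjecture. Your write-up is consistent with this: you recast the inequality spectrally, extract the easy bound $\lambda_{1,e}(-L_K)>n$ from the fact that the $n$-eigenspace $E_1^K$ consists entirely of odd functions, and then correctly flag that closing the gap to $n+1$ uniformly in $K$ is precisely what remains open. So there is no ``paper's proof'' to compare against, and you have not claimed to supply one.

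A few corrections to your discussion are in order. First, the assertion $p_0(K):=n+1-\lambda_K\in[0,1)$ is wrong: $p_0(K)<1$ follows from $\lambda_K>n$, but nothing in your argument forces $\lambda_K\le n+1$, and your own ball computation already violates it. (Incidentally, degree-two spherical harmonics on $S^n$ have eigenvalue $2(n+1)$, not $2n$; thus $\lambda_B=2(n+1)$, which is exactly Milman's sharp upper bound cited in the introduction.) The quantity that genuinely lies in $[0,1)$ is the global $p_0=n+1-\inf_K\lambda_{1,e}(-L_K)$, and the lower bound $p_0\ge 0$ comes from the Kolesnikov--Milman evaluation at the cube, not from anything you wrote. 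Second, your parenthetical that saturation ``at $p=p_0(K)$ [is] realised by any eigenfunction attaining $\lambda_K$'' runs counter to one of the paper's principal new results: it is proved that $\lambda_{1,e}(-L_K)$ has no local minimum over $K\in\cK^4_{+,e}$, so in fact $\lambda_{1,e}(-L_K)>n+1-p_0$ strictly for every such $K$, and the infimum defining $p_0$ is not attained by any smooth body.
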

In particular, when $p=0$, this gives the local log-Minkowski conjecture, see \cite{CLM} as well.
\begin{conj}[Local log-Minkowski conjecture, Kolesnikov-Milman]
    When restricted on the subspace $E_{\text{even}}$ of even functions, the eigenvalue of $-L_K$ satisfies 
\[
\lambda_{1, e}(-L_K)\geq n+1. 
\]
\end{conj}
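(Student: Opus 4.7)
The plan is to treat the claim as an infimum bound on the Rayleigh quotient
\[
\mathcal{R}_K(z) \;=\; \frac{\int_{S^n} H_K^{ij} z_i z_j h_K^2 \, dS_K}{\int_{S^n} z^2 h_K \, dS_K}
\]
over even $z \in C^2(S^n)$ orthogonal in $L^2(h_K \, dS_K)$ to the constants. From the facts (1)--(3) just recalled, on the full space $C^2(S^n)$ the eigenvalues of $-L_K$ start at $0$ (constants) and then $n$, realized on the $(n+1)$-dimensional space $E_1^K$ of normalized linear functions. Every element of $E_1^K$ is odd, so restricting to $E_{\text{even}}$ automatically excises this low mode, and the conjecture asks that the next even eigenvalue improve on the Brunn--Minkowski threshold by exactly one unit.

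The natural route is a Bochner/Reilly-type identity for $L_K$. I would write $\mathcal{R}_K(z)$ as the Dirichlet form for $-L_K$ in $L^2(dV_K)$, let $u$ solve $L_K u = -z$, and then compute $\int (L_K u)^2 \, dV_K$ by two integrations by parts. The resulting curvature contributions should assemble into a nonnegative quadratic form in the trace-free part of $\nabla^2 u + u\, \text{Id}$; the evenness constraint $z \in E_{\text{even}}$ forces $u \perp E_1^K$, which is precisely the orthogonality needed to absorb one extra unit and upgrade the threshold from $n$ to $n+1$.

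A third step would be a continuity/degree argument: using that $\lambda_{1,e}(-L_B) = 2(n+1)$ for the unit ball $B$ (since even eigenvalues of $-\Delta_{S^n}$ start at $k(k+n-1)$ with $k=2$), one would propagate the strict inequality along a path $K_t \in \cK_{+,e}^2$ of symmetric bodies from $B$ to a given $K$; any would-be bifurcation below $n+1$ would force saturation of the Bochner identity at some intermediate body, which one then aims to preclude by a rigidity analysis of the associated eigenfunction.

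The main obstacle I anticipate is exactly this last rigidity step. The expected extremal configurations are products of lower-dimensional symmetric bodies, where equality is attained only in a limiting sense, and controlling the operator $L_K$ under degenerations (or under symmetrizations that collapse $K$ toward such products) is genuinely delicate: standard Steiner, Ehrhard, and $p$-symmetrization procedures are not known to be monotone in a useful way for $L_K$ restricted to even functions. A successful proof along these lines would likely need either a new symmetrization compatible with the even spectral gap of $L_K$, or a clever choice of deformation path along which the Bochner identity admits a strict improvement; absent such an ingredient, the present paper's alternative of parametrizing the uniqueness question through the Hilbert-operator threshold $p_0$ is a natural way to sidestep the difficulty.
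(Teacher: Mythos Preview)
The statement you are attempting to prove is labeled in the paper as a \emph{conjecture} (the Local log-Minkowski conjecture of Kolesnikov--Milman), not a theorem; the paper does not prove it, and it remains open. There is therefore no ``paper's own proof'' to compare against. The paper's contribution is precisely to work \emph{around} this conjecture: it defines $p_0\in[0,1)$ by $\inf_K \lambda_{1,e}(-L_K)=n+1-p_0$ and proves that uniqueness of the even $L_p$-Minkowski problem holds for all $p\ge p_0$ (and fails for $p<p_0$), without determining whether $p_0=0$.

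Your outline is a reasonable heuristic strategy, and you correctly locate the genuine obstruction. The Bochner/Reilly route you sketch is essentially the mechanism Kolesnikov--Milman used to obtain their partial bound $\lambda_{1,e}>n+1-p_n$ with $p_n=1-cn^{-3/2}$; the loss comes exactly from the curvature terms in the commutator identity, which are not nonnegative in general and cannot be absorbed without further structure. Your third step (continuity from the ball plus rigidity at a hypothetical first bifurcation) is also where the paper's own machinery lives, but note that the paper only manages to prove that $\lambda_{1,e}(-L_K)$ \emph{has no local minimum} among $\cK^4_{+,e}$ bodies; this is strictly weaker than ruling out the infimum $n+1-p_0$ being $n+1$, because the infimum may be approached through degenerations (the conjectured extremizer is the cube, which is not in $\cK^2_{+,e}$). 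Controlling $\lambda_{1,e}$ under such degenerations is exactly the open problem the paper flags in its Conjecture on continuity of $\lambda_{1,e}$ in the Hausdorff topology. So your proposal is not a proof but a faithful diagnosis of why no proof currently exists.
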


A main result proved by Kolesnikov and Milman \cite{KM01} is the following, 
\begin{thm}[Kolesnikov-Milman]
There exists $p_n\in (0, 1)$ of the form $p_n=1-cn^{-3/2}$ such that, for all $K\in \cK_{+, e}^2$, 
\[
\lambda_{1, e}>n+1-p_n.
\]
Moreover suppose \eqref{hessianestimate} holds for some fixed $n, p\in[0,1)$, and $K\in \cK_{+,e}^2$, then for any $q>p$, there exists a $C^2$-neighborhood $N_{q,K}$ of $K$ in $\cK_{+,e}^2$, such that if  $L_1,L_2\in N_{q,K}$ satisfy $h_{L_1}^{1-q}dS_{L_1}=h_{L_2}^{1-q}dS_{L_2}$, then $h_{L_1}=h_{L_2}$.
\end{thm}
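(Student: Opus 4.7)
The theorem has two logically distinct parts: a quantitative global spectral estimate $\lambda_{1,e}(-L_K) > n+1-p_n$ with $p_n = 1-cn^{-3/2}$, and a local uniqueness statement conditional on the hessian inequality \eqref{hessianestimate} for some $p$. Both revolve around the spectral theory of $L_K$, but they require different techniques.

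For the eigenvalue bound, the trivial estimate $-L_K \geq n$ on $E_0^\perp$ is saturated only by the odd linear eigenspace $E_1^K$. Since even functions are $L^2(h_K\, dS_K)$-orthogonal to $E_1^K$, there must be a strict gap, and the task is to make it quantitative. My plan is to set up a centro-affine Bochner identity for $L_K$: integrating $(L_K z)^2$ against $h_K\, dS_K$ and integrating by parts using the divergence-free property of the cofactor tensor $H_K^{ij}\det H_K$, one rewrites the result as a nonnegative Hessian-squared term $\int H_K^{ij}H_K^{kl} z_{ik} z_{jl}\, h_K^3\, dS_K$ plus lower-order and curvature contributions whose sign and size are controlled by how close $z$ lies to a linear function. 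Testing this identity on a first even eigenfunction $z_{1,e}$ and using its orthogonality to $E_1^K$ to bound the projection onto the linear modes should upgrade $\lambda_{1,e} \geq n$ to a strict inequality. The polynomial rate $cn^{-3/2}$ is delicate: a naive argument would yield only $O(1/n)$, and extracting $O(n^{-3/2})$ requires a careful interpolation between a Poincar\'e-type bound on $E_1^{K,\perp}$ and the Bochner identity. I expect this quantitative step is the main analytic obstacle.

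For the local uniqueness claim, I would linearize $F_q(h) := h^{1-q}\det(\nabla^2 h + h\,\text{Id})$ at $h = h_K$. Writing a perturbation as $u = z\, h_K$ and using $H_K^{ij} H_{K,ij} = n$ together with $L_K z = 2H_K^{ij}(h_K)_j z_i + h_K H_K^{ij} z_{ij}$, a direct computation gives
\[
\frac{D F_q[h_K](z\, h_K)}{F_q(h_K)} \;=\; L_K z + (n+1-q)\, z.
\]
On constants, this is multiplication by $n+1-q > 0$; on even mean-zero functions the inequality \eqref{hessianestimate} gives $\langle z,\, L_K z + (n+1-q)z\rangle_{h_K dS_K} \leq (p-q)\|z\|^2 < 0$ since $q > p$. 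Thus the linearization is a self-adjoint, uniformly elliptic operator on even functions with no kernel, hence an isomorphism $C^{2,\alpha}_e(S^n) \to C^{0,\alpha}_e(S^n)$. The inverse function theorem produces a $C^{2,\alpha}$-neighborhood of $h_K$ on which $F_q$ is injective; by continuity of the coefficients of the linearization, injectivity extends to a $C^2$-neighborhood $N_{q,K}$. Since $dS_{L_i} = \det(\nabla^2 h_{L_i}+h_{L_i}\,\text{Id})\, d\sigma$ has continuous density for $L_i \in N_{q,K}$, the hypothesis $h_{L_1}^{1-q}dS_{L_1} = h_{L_2}^{1-q}dS_{L_2}$ is equivalent to the pointwise equation $F_q(h_{L_1}) = F_q(h_{L_2})$, which forces $h_{L_1} = h_{L_2}$.
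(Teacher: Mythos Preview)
This theorem is not proved in the paper at all: it is quoted from Kolesnikov--Milman \cite{KM01} as background and then used as input to the paper's own arguments. There is consequently no in-paper proof to compare your proposal against.

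On the substance of your proposal: the local-uniqueness half is correct and is exactly the standard argument (and essentially what Kolesnikov--Milman do). Your linearization computation $DF_q[h_K](zh_K)/F_q(h_K) = L_Kz + (n+1-q)z$ is right, and the sign splitting --- positive on constants, strictly negative on mean-zero even functions by \eqref{hessianestimate} with $q>p$ --- shows the self-adjoint elliptic linearization has trivial even kernel, so the inverse function theorem finishes.

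For the spectral bound $\lambda_{1,e} > n+1-p_n$ with $p_n = 1 - cn^{-3/2}$, your outline (a Bochner/Reilly-type identity for $L_K$, exploiting orthogonality of even functions to the linear eigenspace $E_1^K$) points in the right direction and is indeed the skeleton of the Kolesnikov--Milman approach. But you have correctly identified the real gap: a bare Bochner identity gives only a qualitative strict inequality, and pushing it to the quantitative rate $n^{-3/2}$ is the main analytic content of the memoir \cite{KM01}. It requires, among other things, careful control of the curvature-type error terms via affine-invariant quantities and a nontrivial interpolation; your sketch does not yet supply any of this, so this half remains a plan rather than a proof.
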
 
By recent progress on the KLS conjecture  \cite{ChenY} and results in \cite{KM01}, one can get that $p_n$ is of the form $1-c_\epsilon n^{-1-\epsilon}$ see \cite{Milman23, B24} for more details. 
Relying on the local uniqueness theorem above,  Chen, Huang, Li, Liu \cite{CHLL} proved $L_p$ Minkowski conjecture for $p\in (p_n, 1)$ using the continuity method in elliptic PDE theory, and they also proved the uniqueness when $f$ is sufficiently closed to $1$ (or a constant) and even. 
\begin{thm}[Chen-Huang-Li-Liu]
    Let $p\in (p_n, 1)$. the Monge-Ampere equation on $S^n$
    \[
    h^{1-p}\det(\nabla^2 h+h\text{Id})=f
    \]
    has a unique even solution if $f$ is a positive even smooth function on $S^n$. 
\end{thm}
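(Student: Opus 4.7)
The plan is a continuity method in the right-hand side data. Fix $p\in(p_n,1)$ and an even positive smooth $f$, set $c=\tfrac{1}{|S^n|}\int_{S^n}f\,d\sigma$, and let $f_t=(1-t)c+tf$ for $t\in[0,1]$, a smooth path of even positive functions. Define
\[
I=\bigl\{t\in[0,1]:\ h^{1-p}\det(\nabla^2 h+h\,\mathrm{Id})=f_t\ \text{has a unique even solution in}\ \cK_{+,e}^2\bigr\},
\]
and show $I=[0,1]$, from which the theorem at $t=1$ follows. At $t=0$ the right-hand side is the constant $c$ and the ball $h\equiv c^{1/(n+1-p)}$ is a solution; the Brendle--Choi--Daskalopoulos theorem gives uniqueness among positive $C^2_+$ solutions for constant data and $p\in(-n-1,1)$, so $0\in I$.

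For \emph{openness} at $t_0\in I$ with solution $h_{K_0}$, I would check that the linearization of $F(h)=h^{1-p}\det(\nabla^2 h+h\,\mathrm{Id})$ at $h_{K_0}$, after dividing by the positive factor $h_{K_0}^{-p}\det(\nabla^2 h_{K_0}+h_{K_0}\,\mathrm{Id})$, is an elliptic operator whose quadratic form on even $z$ is (up to integration by parts against the weight $h_{K_0}\,dS_{K_0}$) exactly the defect in the local $L_p$-Minkowski inequality \eqref{hessianestimate},
\[
\int_{S^n}H_{K_0}^{ij}z_i z_j\,h_{K_0}^2\,dS_{K_0}-(n+1-p)\int_{S^n}z^2\,h_{K_0}\,dS_{K_0}.
\]
The Kolesnikov--Milman spectral gap $\lambda_{1,e}(-L_{K_0})>n+1-p_n>n+1-p$ makes this form strictly positive on even $z$ orthogonal to constants, so the linearization is an isomorphism between appropriate even H\"older spaces. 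The inverse function theorem then produces a smooth family $h_t\in\cK_{+,e}^2$ of even solutions for $t$ near $t_0$, and the Kolesnikov--Milman local uniqueness theorem (applied with $q=p>p_n$) rules out any other even solution in a $C^2$-neighborhood.

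For \emph{closedness}, suppose $t_k\to t_\infty$ with $t_k\in I$ and solutions $h_{t_k}$. Uniform boundedness of $f_{t_k}$ above and away from $0$, combined with comparison against constant solutions, gives uniform $C^0$ upper and lower bounds on $h_{t_k}$; evenness is essential here, preventing $K_{t_k}$ from collapsing onto a lower-dimensional subspace through the origin. Standard Caffarelli / Krylov--Safonov regularity for Monge--Amp\`ere equations on $S^n$ (or the $L_p$-Minkowski regularity of Chou--Wang and Hug--Lutwak--Yang--Zhang) then upgrades these to uniform $C^{2,\alpha}$ estimates. Extracting a subsequential limit yields an even solution $h_{t_\infty}$, whose uniqueness follows from local uniqueness at $h_{t_\infty}$ combined with openness applied over $[0,t_\infty]$. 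Thus $I$ is closed and hence equals $[0,1]$.

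Granting the Kolesnikov--Milman spectral gap as deep input, the principal technical obstacle is the uniform $C^0$ lower bound along the continuity path: without evenness, $L_p$-Minkowski solutions with $p\in[0,1)$ can degenerate so that their support functions vanish on a subsphere. Evenness forces the width of $K$ to be symmetrically controlled in every direction, and combined with the uniform bound on $\int h^{1-p}\,dS_K$ supplied by the data this rules out such degeneration; uniformizing the non-degeneracy along the entire homotopy $\{f_t\}$ is the main work beyond the spectral input.
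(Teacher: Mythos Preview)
The paper does not prove this theorem; it is quoted as the result of \cite{CHLL}, described only as ``the continuity method in elliptic PDE theory'' built on the Kolesnikov--Milman local uniqueness and the $C^0$ estimate recorded here as \cite[Lemma 2.1]{CHLL}. Your outline faithfully reconstructs that strategy.

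Your open--closed argument has a genuine gap, however: at each step you only establish \emph{local} uniqueness. In the openness step, the Kolesnikov--Milman theorem rules out a second even solution in a $C^2$-neighborhood of $h_{t_0}$, but says nothing about a hypothetical second solution far away. The repair is to feed the a priori estimates back into openness: if some $t_k\to t_0\in I$ carried two distinct even solutions $h^1_{t_k}\neq h^2_{t_k}$, both sequences are $C^{2,\alpha}$-precompact (uniform two-sided $C^0$ bounds from evenness and \cite[Lemma 2.1]{CHLL}, then Caffarelli), and every subsequential limit solves the equation at $t_0$, hence equals $h_{t_0}$; for large $k$ both solutions then lie in the $C^2$-neighborhood where local uniqueness forces $h^1_{t_k}=h^2_{t_k}$, a contradiction. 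The closedness step has the mirror-image defect: ``uniqueness follows from local uniqueness at $h_{t_\infty}$ combined with openness applied over $[0,t_\infty]$'' is not an argument, since you have not shown $[0,t_\infty)\subset I$. What works instead is that the spectral gap $\lambda_{1,e}(-L_K)>n+1-p$ holds at \emph{every} $K\in\cK_{+,e}^2$, so a hypothetical second solution $\tilde h\neq h_{t_\infty}$ also has invertible linearization and, by the implicit function theorem, branches into a second curve of solutions for $t$ near $t_\infty$, contradicting uniqueness at the $t_k\in I$. Once these two points are patched, your scheme is exactly the CHLL proof.
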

 When $f$ is closed to a constant but not assumed to be even, there are some recent progress as well \cite{CFL, BS}.
In this paper we continue to investigate the uniqueness of the even $L_p$ Minkowski problem, after the progress in \cite{KM01, CHLL, Put}. Let $p_0\in [0, 1)$ such that
\begin{equation}
    \inf_{K\in \cK^2_{+, e}} \lambda_{1, e}(-L_K)=n+1-p_0.
\end{equation}

Our main theorem can be stated as follows,
\begin{thm}\label{main}
    The even $L_p$ Minkowski conjecture holds for $p=p_0$; that is, 
     \[
    h^{1-p_0}\det(\nabla^2 h+h\text{Id})=f
    \]
    has a unique smooth even solution if $f$ is a positive even smooth function on $S^n$.  Moreover, for any $K\in \cK^4_{+, e}$, we have
    \begin{equation}\label{eigenvalue-01}
    \lambda_{1, e}(-L_K)>n+1-p_0. 
    \end{equation}
    For each  $p\in (-n-1, p_0)$, there exists infinitely many positive smooth even functions $f$, the equation
    \[
    h^{1-p}\det(\nabla^2 h+h\text{Id})=f
    \]
    has at least two even smooth solutions. 
\end{thm}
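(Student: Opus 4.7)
My plan is to prove the three assertions in the order: first the strict eigenvalue bound \eqref{eigenvalue-01}, then the uniqueness at $p=p_0$ (which I expect to follow from \eqref{eigenvalue-01} by now-standard machinery), and finally the non-uniqueness for $p<p_0$ (which uses the definition of $p_0$ directly).

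For \eqref{eigenvalue-01}, I would argue by contradiction. Suppose some $K_0\in\cK^4_{+,e}$ attains the infimum, so $\lambda_{1,e}(-L_{K_0})=n+1-p_0$, with unit-normalized even eigenfunction $\phi\in L^2(dV_{K_0})$. Then $K_0$ minimizes $K\mapsto\lambda_{1,e}(-L_K)$ on $\cK^4_{+,e}$. Applying the standard first-variation formula for a simple eigenvalue along a path $h_{K_t}=h_{K_0}+t\eta$ with even $\eta$, the vanishing of $\frac{d}{dt}\lambda_{1,e}(-L_{K_t})|_{t=0}$ for all admissible $\eta$ produces an overdetermined identity jointly on $h_{K_0}$ and $\phi$; coupling this with the required non-negativity of the second variation should yield rigidity forcing $K_0$ to be a Euclidean ball (or an affine image of one). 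But for the unit ball, $L_K$ reduces to the spherical Laplacian $\Delta_{S^n}$, so $\lambda_{1,e}(-L_K)=2(n+1)>n+1-p_0$ whenever $p_0\in[0,1)$, a contradiction. A more robust alternative that sidesteps a full classification would construct, at any candidate smooth extremizer, an explicit even support-function perturbation along which the eigenvalue strictly decreases to first order, exploiting the freedom granted by evenness.

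Assuming \eqref{eigenvalue-01}, uniqueness at $p=p_0$ follows from strict convexity of the $L_{p_0}$-energy functional on origin-symmetric bodies. Given two hypothetical even smooth solutions $K_1\neq K_2$ of $h^{1-p_0}\det(\nabla^2 h+h\,\text{Id})=f$, one interpolates through a path $K_t$ and differentiates the energy twice; at each interior $t$ the Hessian is controlled below by $\lambda_{1,e}(-L_{K_t})-(n+1-p_0)>0$ thanks to \eqref{eigenvalue-01}, ruling out two critical points at the same $f$. This is precisely the sharpening of the Kolesnikov-Milman/Chen-Huang-Li-Liu mechanism enabled by the strict bound. For the non-uniqueness when $p<p_0$, the definition of $p_0$ provides some $K\in\cK^2_{+,e}$ with $\lambda_{1,e}(-L_K)<n+1-p$; let $\phi$ be its even eigenfunction. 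Along $h_s=h_K+s\phi$, the linearization of the map $s\mapsto h_s^{1-p}\det(\nabla^2 h_s+h_s\,\text{Id})$ is governed by $L_K+(n+1-p)\,\text{Id}$ acting on the eigenfunction, which is strictly negative; hence $K$ is a saddle of the $L_p$-energy in the even $\phi$-direction. A Lyapunov-Schmidt reduction, or a direct second-order expansion exploiting the $K\mapsto -K$ symmetry, then produces two distinct even solutions sharing the same $L_p$-surface area measure $f$. Varying the underlying $K$ through a continuous family of bodies whose eigenvalue remains strictly below $n+1-p$ yields infinitely many such $f$.

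The principal obstacle is the rigidity step in \eqref{eigenvalue-01}. Unlike the pointwise inequalities in \cite{KM01}, ruling out smooth attainers of the infimum is a \emph{global} variational problem, and the first- and second-variation identities for the eigenvalue functional $\lambda_{1,e}(-L_K)$ involve the full nonlinear dependence of the Hilbert-Brunn-Minkowski operator on $K$, which is considerably more delicate than the linearized estimates at a fixed $K$. I expect that the cleanest path is to exhibit, at any candidate smooth extremizer, an explicit monotone deformation rather than to classify minimizers; a secondary subtlety is to handle minimizing sequences that may degenerate out of $\cK^4_{+,e}$, ensuring that the strict inequality genuinely holds throughout the smooth class.
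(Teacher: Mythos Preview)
Your plan contains a genuine gap in the uniqueness argument at $p=p_0$, and your endgame for \eqref{eigenvalue-01} diverges from the paper at the decisive step.

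For uniqueness at $p_0$: deducing it from the strict eigenvalue bound \eqref{eigenvalue-01} does not close. The bound $\lambda_{1,e}(-L_K)>n+1-p_0$ holds for each individual $K\in\cK^4_{+,e}$ but is \emph{not uniform}---by definition of $p_0$ the infimum is exactly $n+1-p_0$---so along a path $K_t$ the gap may degenerate, and the Chen--Huang--Li--Liu continuity mechanism requires a uniform gap. Moreover, the second-variation formula you invoke simplifies to the eigenvalue expression only at \emph{critical points} of $F_{p_0,f}$; intermediate $K_t$ are not known a priori to be critical, so you cannot read off strict convexity along the path from pointwise Hessian positivity at the endpoints. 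The paper proceeds quite differently and does \emph{not} use \eqref{eigenvalue-01} for uniqueness at all: since the $L_{p_0}$-Brunn--Minkowski inequality holds (via Putterman), both putative solutions $h_L,h_K$ are global minimizers of $F_{p_0,f}$; the $L_p$-interpolation $h_\lambda=((1-\lambda)h_L^{p_0}+\lambda h_K^{p_0})^{1/p_0}$ then remains a minimizer and hence solves the \emph{same PDE} for every $\lambda$. Differentiating the PDE twice in $\lambda$ yields a pointwise identity which, evaluated at a maximum of $h^{-p_0}w$ with $w=(h_K^{p_0}-h_L^{p_0})/p_0$, forces $w\equiv 0$ via an elementary algebraic inequality. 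This maximum-principle argument on the equation, not the energy, is the missing idea in your plan.

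For \eqref{eigenvalue-01}: your setup---assume a local minimizer $K_0$ and derive an Euler--Lagrange identity from $\frac{d}{dt}\lambda_{1,e}(-L_{K_t})|_{t=0}=0$ for all even perturbations---matches the paper. But your proposed endgame (``rigidity forcing $K_0$ to be a ball'' via second variation) is speculative and unnecessary. The paper instead evaluates the first-order Euler--Lagrange PDE at a maximum point of the eigenfunction $z$: there $\nabla z=0$, the equation collapses, and combining the eigenfunction relation $H^{ij}z_{ij}h=-(n+1-p_0)z$ with the elementary inequality $H^{ij}H^{kl}z_{ik}z_{jl}\ge (H^{ij}z_{ij})^2/n$ gives $(n+1)(1-p_0)z^2\le 0$, hence $z\equiv 0$---a contradiction with no classification of $K_0$ required. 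Your non-uniqueness sketch for $p<p_0$ is essentially the known argument the paper cites from Milman.
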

The non-uniqueness result in Theorem \ref{main} was derived by E. Milman in \cite[Corollary 1.5]{Milman22} as a direct consequence of the fact that $\inf\lambda_{1, e}\leq n+1$.  We include the statement in our theorem for completeness. 
Chen, Huang, Li, Liu \cite{CHLL} proved that the even $L_p$ Minkowski conjecture holds for $(p_0, 1)$. Our improvement is to prove that the uniqueness conjecture actually holds at $p_0$ as well. In short we can say that the uniqueness is closed if it is open in $p$.  
Our result is a consequence of the following,  
\begin{thm}Suppose for $p\in [0, 1)$, the $L_p$-Brunn-Minkowski inequality \eqref{BM-02} holds, then 
\[
h^{1-p}\det(\nabla^2h+h\text{Id})=f
\]
has a unique smooth even solution if $f$ is a positive even smooth function on $S^n$. 
\end{thm}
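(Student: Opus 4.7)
Let $h_1,h_2$ be two smooth even solutions, giving origin-symmetric bodies $K_1,K_2\in\cK^2_{+,e}$ with $h_{K_i}^{1-p}\,dS_{K_i}=f\,d\sigma$ for $i=1,2$. The plan is to feed this common identification of the $L_p$-surface area measures into the hypothesized inequality \eqref{BM-02} in both orderings $(L,K)=(K_1,K_2)$ and $(K_2,K_1)$, force equality, and then extract $K_1=K_2$ from the equality case.

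\textbf{Step 1 (volumes coincide).} Substituting $h_{K_2}^{1-p}\,dS_{K_2}=f\,d\sigma=h_{K_1}^{1-p}\,dS_{K_1}$ into the left-hand side of \eqref{BM-02} with $(L,K)=(K_1,K_2)$ gives the telescoping
\[
\frac{1}{p}\int_{S^n}h_{K_1}^p h_{K_2}^{1-p}\,dS_{K_2}=\frac{1}{p}\int_{S^n}h_{K_1}^p f\,d\sigma=\frac{1}{p}\int_{S^n}h_{K_1}\,dS_{K_1}=\frac{n+1}{p}V(K_1),
\]
so \eqref{BM-02} becomes $V(K_1)\ge V(K_2)$. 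Swapping the roles of $K_1,K_2$ yields the reverse inequality, hence $V(K_1)=V(K_2)=:V$ and both applications of \eqref{BM-02} must be equalities. For $p=0$ the same argument goes through via the logarithmic Minkowski inequality, with the additional convenience that $V(K_1)=V(K_2)=\frac{1}{n+1}\int f\,d\sigma$ is then automatic.

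\textbf{Step 2 (equality forces identity).} To promote equality in \eqref{BM-02} to $K_1=K_2$, consider the $L_p$-interpolation $h_t^p=(1-t)h_{K_2}^p+t\,h_{K_1}^p$ with body $K_t$. The $L_p$-Brunn--Minkowski inequality, equivalent to \eqref{BM-02} by \cite[Lemma 3.1]{BLYZ01}, gives $V(K_t)\ge V$. On the other hand, applying \eqref{BM-02} to $(L,K)=(K_t,K_2)$ and expanding the LHS as a convex combination, reusing the Step~1 computation on each summand, yields $V(K_t)\le V$. Thus $V(K_t)\equiv V$ on $[0,1]$, i.e.\ equality holds in the $L_p$-Brunn--Minkowski inequality along the entire interpolation. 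The equality characterization for origin-symmetric bodies forces $K_1$ and $K_2$ to be dilates, and the equal volumes fix the dilation factor at $1$, so $K_1=K_2$ and hence $h_1=h_2$.

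\textbf{Main obstacle.} The only delicate point is the equality characterization invoked at the end of Step~2. For $p\ge 1$ this is classical, but for $p\in[0,1)$ in the even case it is part of the BLYZ conjecture package rather than a formal consequence of the bare inequality. If one does not simply build it into the hypothesis, it can be extracted from \eqref{BM-02} itself by a second-variation analysis along $t\mapsto K_t$: the constancy of $V(K_t)$ makes the Hessian of $V(K_t)^{p/(n+1)}$ vanish identically, which through the Hilbert--Brunn--Minkowski operator framework of \cite{KM01} forces the variation direction into the trivial eigenspace of $-L_{K_2}$---that is, $h_{K_1}^p$ is a constant multiple of $h_{K_2}^p$, i.e., $K_1$ is a dilate of $K_2$, completing the argument.
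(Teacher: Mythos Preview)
Your Step~1 is correct and coincides with the paper's opening move. The genuine gap is in Step~2, and you have in fact flagged it yourself: for $p\in[0,1)$ the equality characterization of \eqref{BM-02} in the even case is \emph{not} known to follow from the bare inequality---it is essentially equivalent to the uniqueness you are proving, so invoking it is circular. Your second-variation patch does not close the gap either. Vanishing of the second variation of $F_{p,f}$ at $K_2$ in the direction $z$ only says that $z$ realizes the infimum in the Rayleigh quotient for $\lambda_{1,e}(-L_{K_2})$, i.e.\ $z$ is an eigenfunction with eigenvalue $n+1-p$. Concluding $z\equiv\text{const}$ would require $\lambda_{1,e}(-L_{K_2})>n+1-p$ \emph{strictly}, whereas the hypothesis \eqref{BM-02} is equivalent only to $\lambda_{1,e}\ge n+1-p$; the strict inequality is a separate theorem of the paper with its own independent proof. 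There is also a regularity issue: $K_t$ is the Aleksandrov body of $q_t$, and $h_{K_t}$ is not a priori $C^2$ or differentiable in $t$, so the second variation is not even available without further work.

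The paper bypasses the equality case entirely by a pointwise PDE argument. After obtaining $V(K_t)\equiv V$ (as you do), it observes that each $h_{K_t}$ is therefore a minimizer of $F_{p,f}$, hence a weak solution of the \emph{same} Monge--Amp\`ere equation $h^{1-p}\det(\nabla^2h+h\,\mathrm{Id})=cf$. Caffarelli's regularity then upgrades each $h_{K_t}$ to $C^{2,\alpha}$ and forces $h_{K_t}=q_t$ pointwise, so $h_\lambda:=\bigl((1-\lambda)h_L^p+\lambda h_K^p\bigr)^{1/p}$ is a smooth one-parameter family of solutions. Differentiating the PDE once in $\lambda$ gives a linear equation for $w$ (where $\partial_\lambda h_\lambda=h_\lambda^{1-p}w$); differentiating a second time and combining yields the pointwise identity
\[
2(1-p)\,h\,H^{ij}(h^{-p}w)_i(h^{-p}w)_j \;=\; H^{ik}H^{jl}A_{ij}A_{kl}\;+\;2(1-p)(n+1-p)\,h^{-2p}w^2,
\]
with $A_{ij}=(\partial_\lambda h_\lambda)_{ij}+\partial_\lambda h_\lambda\,\delta_{ij}$. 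At an extremum of $h^{-p}w$ the left side vanishes while the right side is a sum of two nonnegative terms, forcing $w=0$ there; hence $w\equiv 0$ and $h_L=h_K$. This maximum-principle computation on the twice-differentiated equation---not a spectral or equality-case argument---is the missing idea.
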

The equivalence between the even $L_p$ Brunn-Minkowski conjectures and the even $L_p$ Minkowski conjecture are well-known, see \cite{BLYZ01} and  in particular \cite[Theorem 1.1, Theorem 2.1]{Milman23} for the most recent update. 
In particular Putterman \cite[Proposition 1.5]{Put} proved that a local version of $L_p$-Brunn-Minkowski inequality implies the global version of the inequality, verifying a conjecture in \cite{KM01}. Her results in particular imply that 
the $L_p$-Brunn-Minkowski inequalities hold at $p=p_0$. 
Our result above (Theorem 4) provides one missing piece in the picture of the equivalence of these conjectures. 
The other new ingredient of our result is to provide the strict eigenvalue estimate \eqref{eigenvalue-01}. We actually prove the following,
\begin{thm}
    For $K\in \cK^4_{+, e}$, then $\lambda_{1, e}(-L_K)$ cannot have a local minimum. 
\end{thm}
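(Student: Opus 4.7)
The plan is to argue by contradiction. Suppose $K_0\in \cK^4_{+,e}$ is a local minimum of $K\mapsto \lambda_{1,e}(-L_K)$, with value $\lambda_0$. Let $z_0 \in C^2_e(S^n)$ be a corresponding first even eigenfunction, normalized so that
\[
\int_{S^n} z_0\, h_{K_0}\, dS_{K_0}=0,\qquad \int_{S^n} z_0^2\, h_{K_0}\, dS_{K_0}=1,
\]
and $-L_{K_0}z_0=\lambda_0 z_0$. The goal is to extract from the local-minimum assumption a pointwise criticality equation on $z_0$ that, combined with the eigenvalue equation, is overdetermined enough to force $z_0\equiv 0$.

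\emph{Step 1 (First variation).} For each even $\phi\in C^4(S^n)$, set $h_t=h_{K_0}+t\phi$, which for $|t|$ small is the support function of some $K_t\in \cK^4_{+,e}$. Choose the constant $c(t)$ with $c(0)=0$ making $z_0+c(t)$ admissible, i.e., $\int (z_0+c(t))h_t\,dS_{K_t}=0$. By the Rayleigh--Ritz characterization,
\[
\lambda_{1,e}(-L_{K_t})\;\leq\;\mathcal{R}(K_t,z_0+c(t))\;:=\;\frac{\int H_{K_t}^{ij}(z_0)_i(z_0)_j\, h_t^2\,dS_{K_t}}{\int (z_0+c(t))^2\, h_t\,dS_{K_t}},
\]
with equality at $t=0$ giving $\lambda_0$. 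The local-minimum assumption gives $\mathcal{R}(K_t,z_0+c(t))\geq \lambda_0$ for $t$ near $0$; hence the derivative at $t=0$ is $\geq 0$. Replacing $\phi\mapsto -\phi$ yields the reverse inequality, so
\[
\frac{d}{dt}\bigg|_{t=0}\mathcal{R}(K_t,z_0+c(t))=0 \quad \text{for every even } \phi\in C^4(S^n).
\]

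\emph{Step 2 (Criticality equation).} Compute this derivative using the pointwise variations
\[
\dot h=\phi,\qquad \dot H^{ij}=-H_{K_0}^{ia}(\phi_{;ab}+\phi g_{ab})H_{K_0}^{bj},\qquad \dot{(dS_{K_t})}\big|_0=H_{K_0}^{ab}(\phi_{;ab}+\phi g_{ab})\,dS_{K_0}.
\]
Since $c(t)$ is a constant, only $c'(0)$ enters; and since $\int z_0 h_{K_0} dS_{K_0}=0$, the $c'(0)$ contributions cancel in both numerator and denominator. The vanishing condition becomes a linear integral identity in $\phi$ and its first and second covariant derivatives. Integrating by parts on $S^n$ transfers these derivatives onto $z_0$, reducing the identity to $\int \phi \cdot \mathcal{G}[z_0,h_{K_0}]\,d\sigma = 0$ for all even $\phi$. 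Since $\mathcal{G}$ is built from even ingredients ($z_0$, $h_{K_0}$, $H_{K_0}^{ij}$) and their derivatives, it is itself even, forcing the pointwise equation
\[
\mathcal{G}[z_0,h_{K_0}]\equiv 0\quad \text{on } S^n.
\]
As a consistency check, the identity holds automatically for $\phi=h_{K_0}$, reflecting the scale-invariance of $L_K$.

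\emph{Step 3 (Rigidity and contradiction).} The criticality equation $\mathcal{G}[z_0,h_{K_0}]=0$ is overdetermined once coupled with $-L_{K_0}z_0=\lambda_0 z_0$. The expected mechanism is that the two equations together force $z_0$ to lie in the rigid subspace $E_1^{K_0}=\mathrm{span}\{h_{K_0}^{-1}\langle u,v\rangle:v\in\R^{n+1}\}$, whose elements are all odd. Combined with the evenness of $z_0$, this yields $z_0\equiv 0$, contradicting the normalization $\int z_0^2 h_{K_0} dS_{K_0}=1$.

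\emph{Main obstacle.} The principal difficulty lies in producing $\mathcal{G}$ in a usable form and executing Step 3. The variation of $H^{ij}$ introduces second covariant derivatives of $\phi$, and after integration by parts one must systematically use the eigenvalue equation to cancel the highest-order derivatives of $z_0$ that appear in $\mathcal{G}$. If direct rigidity resists, a viable alternative is to push Step 1 to second order: under the first-order condition obtained above, compute $\frac{d^2}{dt^2}|_{t=0}\mathcal{R}(K_t,z_0+c(t))$ and seek an even $\phi$ making it strictly negative, which would directly contradict the local-minimum assumption.
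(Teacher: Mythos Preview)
Your Steps 1--2 are exactly the paper's approach: fix the eigenfunction $z$ as a test function, vary $K$, and use the local-minimum hypothesis to derive a pointwise Euler--Lagrange equation $\mathcal{G}[z,h]=0$. (The paper works with the difference $g(t)=\text{numerator}-\lambda_0\cdot\text{denominator}$ rather than the quotient, and handles the orthogonality constraint via a correction term $s(t)$ with $s(0)=s'(0)=0$; this is equivalent to your $c(t)$ device.)

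The genuine gap is Step 3. Your proposed mechanism---that the system forces $z_0$ into the odd eigenspace $E_1^{K_0}$---is not what happens and there is no evident route to it; note that $E_1^{K_0}$ has eigenvalue $n$, not $\lambda_0>n$, so this would already be inconsistent. The paper's actual argument is a pointwise maximum-principle computation. Writing $\lambda_0=n+1-p_0$ with $p_0<1$, the criticality equation takes the form
\[
\bigl[(H^{ij}H^{kl}-H^{ik}H^{jl})h^2 z_iz_j\det H\bigr]_{kl}+(\cdots)z_iz_j\text{-terms}+(n+1-p_0)(n+1-2p_0)z^2\det H=0.
\]
At a maximum of $z$ one has $z_i=0$ and $(z_iz_j)_k=0$, so only the second derivative of the bracketed term survives and reduces to $(H^{ij}H^{kl}-H^{ik}H^{jl})h^2(z_{ik}z_{jl}+z_{il}z_{jk})$. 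Using the eigenvalue equation $H^{ij}hz_{ij}=-(n+1-p_0)z$ at that point, the identity collapses to
\[
h^2\,H^{ij}H^{kl}z_{ik}z_{jl}=p_0(n+1-p_0)\,z^2.
\]
But the elementary trace inequality $\mathrm{tr}\bigl((H^{-1}D^2z)^2\bigr)\ge \tfrac{1}{n}\bigl(\mathrm{tr}(H^{-1}D^2z)\bigr)^2$ gives $h^2 H^{ij}H^{kl}z_{ik}z_{jl}\ge \tfrac{1}{n}(n+1-p_0)^2 z^2$. Comparing forces $(n+1)(1-p_0)z^2\le 0$ at the maximum, hence $\max z=0$; replacing $z$ by $-z$ gives $z\equiv 0$, a contradiction. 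This is the missing idea your outline needs; the second-order alternative you mention is not required.
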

As this manuscript was being completed, we note that S. Wang also obtained similar results in\cite{Wang25}, where he proved that if the local log Brunn-Minkowski inequality holds, then the equality occurs only for homothetic convex bodies.  It's equivalent to that $\lambda_{1,e}(-L_K)>0$. However, our approach here is very different from \cite{Wang25}.  
Kolesnikov and Milamn \cite[Proposition 5.14]{KM01} conjectured that $\inf \lambda_{1, e}$ is taken by the cube,  which is known to be $n+1$ \cite[Theorem 10.2]{KM01} (note that we use a different normalization and dimension convention).  Our result shows that if the conjecture $\inf \lambda_{1, e}=n+1$ is true, then indeed $\lambda_{1, e}(-L_K)>n+1$ for any $K\in \cK^4_{+, e}$. 
 For $p<0$, it is known that both \eqref{BM-01} and \eqref{BM-02} fail in general and there are many previous known non-uniqueness results \cite{Stancu02, Andrews03, CW06, JLW, HLW16, Li19, LLL22, KM01} etc. The $L_p$ Brunn-Minkowski theory and $L_p$ Minkowski problem have attract much attention over last several decades after the seminal work of Lutwak \cite{Lut95}, see for instance \cite{BBCY, BHZ, BLYZ02, CLZ, CW06, HLYZ, HLX, LYZ, Stancu01, U, Zhu01, Zhu02} etc. 
The existence and uniqueness of the $L_p$ Minkowski problem are well understood when $p\geq 1$. When $p<0$, the uniqueness fails even restricted to smooth origin-symmetric convex bodies \cite{JLW}.  When $0\leq p<1$,  the uniqueness of solutions to the $L_p$-Minkowski problem turns out to be really challenging and it remains a very important open problem in the field.
We refer in particular to the recent survey paper by K. B\"or\"oczky \cite{B24} and \cite{KM01, Milman23} for an overview and more references of the problem and related topics in the subject. 

The work of Kolesnikov and Milman \cite{KM01} indicates the significance of the study of the first even eigenvalue $\lambda_{1, e}(-L_K)$ for the uniqueness conjecture of the  even Logarithm Minkowski problem. 
Built on the progress in \cite{KM01, CHLL, Put}, our results above give a characterization of the uniqueness to the even $L_p$ Minkowski conjecture.
In \cite{Milman22}, E. Milman provides an elegant proof  that the maximum of $\lambda_{1, e}(-L_K)$ equals $2(n+1)$, and the maximum is achieved precisely by all the ellipsoids. The study of infimum turns out to be much more delicate. For nonsmooth convex body $K$,  $-L_K$ does not have an apparent meaning as a regular differential operator. There are several notions of $\lambda_{1, e}$ introduced for nonsmooth case \cite{KM01, Put, Milman22}, see in particular \cite[Section 1.4]{Milman22} for a detailed discussion.  It is proved \cite{KM01} that $\lambda_{1, e}(-L_K)$ is invariant under the action of a general linear transformation of $\R^{n+1}$,  \[\lambda_{1, e}(-L_{T(K)})=\lambda_{1, e}(-L_K), T\in GL(n+1).\]
Here we give an alternative definition of the first even eigenvalue when a convex body is not smooth. 
We introduce a complex notation. We embed $\R^{n+1}$ into $\R^{n+1}\times \mathbb{T}^{n+1}$, where $\mathbb{T}^{n+1}$ is a standard torus.
Using the coordinate $x\in \R^{n+1}$ and $y\in \mathbb{T}^{n+1}$, we have a standard complex structure on $\R^{n+1}\times \mathbb{T}^{n+1}$ with the coordinate $z=x+\sqrt{-1}y$. 
The support function $h$, as a homogeneous convex function on $\R^{n+1}$, can be viewed as to a plurisubharmonic function on $\R^{n+1}\times \mathbb{T}^{n+1}$, simply extending $h$ to be constant along each $\mathbb{T}^{n+1}$ fiber. For a given function $u$ on $S^n$, it can also be extended as a homogenous function in $\R^{n+1}$ and hence a function on $\R^{n+1}\times \mathbb{T}^{n+1}$. Then we have the following,
\[
H^{ij}u_iu_jh^2 \det(H) (\sqrt{-1})^{n+1} dZ\wedge d\bar Z=c_n\sqrt{-1}\partial u\wedge \bar\partial u\wedge B^n,
\]
where $B$ is the $(1, 1)$ form on $\R^{n+1}\times \mathbb{T}^{n+1}$,
\[
B=\sqrt{-1}\partial\bar\partial h+\sqrt{-1}\partial r\wedge \bar \partial r,
\]
and $r=|x|$ is the radial function on $\R^{n+1}$, extended to $\R^{n+1}\times \mathbb{T}^{n+1}$ trivially in the $\mathbb{T}^{n+1}$ fiber. 
As a direct consequence of well known facts in complex pluripotential theory, 
\[
\sqrt{-1}\partial u\wedge \bar\partial u\wedge B^n 
\]
depends continuously on $h$ (in the $L^\infty$ topology). 
As a direct application, for any $u\in C^2_e(S^n)$, we have a well defined Radon measure for a convex body in $\cK_e$,
\[
\mathfrak{m}_{K, u}=H^{ij}\det{H} u_iu_j dS.
\]
 Given this, we can directly extend the definition of $\lambda_{1, e}(-L_K)$ for $K\in \cK_e$,  see \cite[Section 1.4]{Milman22} for comparison. 
It is rather straightforward to see that $\lambda_{1, e}(-L_K)$ is upper semicontinuous with respect to the Hausdorff topology.
The following problem seems to be really challenging and intriguing. 
\begin{conj}
    The first even eigenvalue $\lambda_{1, e}(-L_K)$ is continuous with respect to the Hausdorff topology of convex bodies. In other words, if $h_k$ converges to $h$ in $L^\infty$, then $\lambda_{1, e}(-L_{K_k})$ converges to $\lambda_{1, e}(-L_K)$. 
\end{conj}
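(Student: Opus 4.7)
The plan is to split the problem into two halves and handle them with different tools. The semicontinuity
\[
\limsup_{k\to\infty} \lambda_{1,e}(-L_{K_k}) \leq \lambda_{1,e}(-L_K)
\]
is essentially free: by the pluripotential continuity of the measures $\mathfrak{m}_{K,u}$ and the weak convergence of the reference measure $h_K\, dS_K$ under Hausdorff convergence, plugging any fixed smooth even test function $u$ into the Rayleigh quotient for $K_k$ yields a quantity converging to the Rayleigh quotient for $K$; taking the infimum over $u$ and then the limit in $k$ gives upper semicontinuity, as already noted in the excerpt. The real work is to establish lower semicontinuity: given $K_k\to K$ in Hausdorff with $\lambda_{1,e}(-L_{K_k}) \to \lambda$, we must show $\lambda \geq \lambda_{1,e}(-L_K)$.

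I would attack lower semicontinuity through the complex-geometric reformulation supplied in the paper. Working on $M = \R^{n+1}\times \mathbb{T}^{n+1}$, the Rayleigh quotient becomes
\[
Q_K(u) \;=\; \frac{c_n \int_M \sqrt{-1}\,\partial u\wedge \bar\partial u\wedge B_K^n}{\int_M u^2\, d\mu_K},
\]
with $B_K = \sqrt{-1}\partial\bar\partial h_K + \sqrt{-1}\partial r\wedge\bar\partial r$ and $d\mu_K$ the torus-invariant extension of $h_K\, dS_K$. By Bedford--Taylor, uniform convergence $h_{K_k}\to h_K$ (which is equivalent to Hausdorff convergence) implies weak convergence of the currents $B_{K_k}^n \to B_K^n$, and weak convergence $d\mu_{K_k}\to d\mu_K$ follows in parallel. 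Normalizing each body by an affine transformation into John position, which is permitted by the $GL(n+1)$-invariance of $\lambda_{1,e}$, I would then pick approximate minimizers $u_k\in C^2_e(S^n)$, normalized so that $\int u_k^2\,d\mu_{K_k} = 1$ and orthogonal to the first two eigenspaces of $-L_{K_k}$, and try to extract a subsequential limit $u_\infty$ admissible for $K$ with $Q_K(u_\infty) \leq \lambda$.

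The main obstacle is precisely this compactness step. Weak convergence of the current $B_{K_k}^n$ does not by itself allow one to pass to the limit in the bilinear pairing $\int\sqrt{-1}\,\partial u_k\wedge\bar\partial u_k\wedge B_{K_k}^n$, because the $u_k$ only converge weakly in a weighted Sobolev-type space whose reference measure is degenerate and itself varies with $k$. A natural route is to establish a Mosco-type convergence $\mathcal{E}_{K_k}\to \mathcal{E}_K$ of the associated Dirichlet forms, for which the uniform lower bound $\lambda_{1,e}\geq n+1-p_n$ coming from \cite{KM01, Put, B24} supplies the necessary coercivity on John-normalized bodies without circularity. One also needs to exclude concentration of $|u_k|^2$ on lower-dimensional pieces of the support of $B_K^n$, for instance on the facets when the limit $K$ is polytopal, which likely requires a local $L^\infty$ estimate via a De Giorgi--Nash--Moser iteration adapted to the degenerate weight $B_K^n$. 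Combined with the no-local-minimum statement established in our main theorem, one could then hope first to prove continuity of $\lambda_{1,e}$ on the smooth stratum $\cK^4_{+,e}$ and afterwards extend by density to the full Hausdorff closure $\cK_e$.
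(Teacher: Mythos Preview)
The statement you are attempting to prove is presented in the paper as an open \emph{conjecture}, not a theorem; the authors explicitly call it ``really challenging and intriguing'' and offer no proof. What the paper does establish is precisely the upper semicontinuity half (the Proposition in Section~4), and your argument for that part is correct and matches theirs.

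For lower semicontinuity your proposal is a research outline, not a proof, and the gaps you yourself flag are the genuine obstructions. When the limit $K$ is non-smooth---say polytopal---the cone-volume measure $h_K\,dS_K$ is supported on a finite set of points of $S^n$, so the associated Dirichlet form $\mathcal{E}_K$ is wildly degenerate and the weighted $L^2$ space carries no useful compactness; nothing prevents the approximate minimizers $u_k$ from concentrating along facet normals and producing a trivial limit. Invoking Mosco convergence or a De~Giorgi--Nash--Moser iteration is aspirational: both frameworks require structural hypotheses (doubling, Poincar\'e, some uniform ellipticity up to weights) that fail exactly in this degenerate regime, and the uniform spectral bound $\lambda_{1,e}\geq n+1-p_n$ from Kolesnikov--Milman gives coercivity only relative to the varying degenerate measures $dV_{K_k}$, not relative to any fixed reference measure. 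Finally, the proposed two-step strategy---prove continuity on $\cK^4_{+,e}$ and then extend by density---is circular, since extending to a non-smooth $K$ on the Hausdorff boundary already presupposes continuity at $K$. In short, you have correctly located the difficulty but not resolved it, which is consistent with the paper leaving the statement open.
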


{\bf Acknowledgment}: the first author is grateful to Yong Huang for valuable discussions on the uniqueness problem related to the log-Minkowski problem.

\numberwithin{thm}{section}
\numberwithin{equation}{section}

\section{The Minkowski problem and variational approach}
The variational approach for the $L_p$ Minkowski problem, $-n-1<p$ was introduced in the seminal work of Chou and Wang \cite{CW06}.
We will focus on the setting $p\in [0, 1)$. 
Let $f$ be a positive even smooth function on $S^n$. 
We define the probability measure induced by $f$ as \[d\mathfrak{m}_f=f dS \left(\int_{S^{n}} fdS\right)^{-1}.\]
For an origin symmetric convex body $K$ with support function $h$, 
we write $dS_K=dS_h=\det(\nabla^2h+h\text{Id})dS_x$ to be the surface area measure.
More generally, for $p\in [0, 1]$, $dS_{h, p}=h^{1-p}\det(\nabla^2 h+h\text{Id})dS_x$. We also write the corresponding probability measure as
\[
d\mathfrak{m}_{K, p}=dS_{h, p} \left(\int_{S^n} dS_{h, p}\right)^{-1}. 
\]
Now let $h$ be an even smooth support function of a convex body. 
First we recall the $F(0, f, h)=F_{0, f}(h)$ functional 
\begin{equation}
   F(0, f, h)=F_{0, f}(h)=\int_{S^{n}} (\log {h})d\mathfrak{m}_f-\frac{1}{n+1}\log V(h),
\end{equation}
and for each $p$ and $f$, recall the definition of $F(p, f, h)=F_{p ,f}(h)$ by
\begin{equation}
F(p, f, h)=F_{p, f}(h)=\frac{1}{p}\log\int_{S^{n}} h^p d\mathfrak{m}_f-\frac{1}{n+1} \log V(h).
\end{equation}
When the probability measure $d\mathfrak{m}_f$ is given by $d\mathfrak{m}_{K, p}$, the functional is denoted as $F_{p, K}$.
The first variation formula is given by \begin{equation}\label{1stvariation-01}
\nabla_h F_{p, f}(u)=\int_{S^n} u\left(h^{p-1}d\mathfrak{m}_f\left({\int_{S^n}h^pd\mathfrak{m}_f}\right)^{-1}-\frac{dS_h}{(n+1)V(h)}\right).
\end{equation}
So the critical points satisfy 
\begin{equation}\label{lp-02}
\left({\displaystyle\int_{S^{n}}h^{p} d\mathfrak{m}_f}\right)^{-1}h^{p-1} d\mathfrak{m}_f=\frac{\det(\nabla^2 h+h \text{Id})dS_x}{(n+1)V(h)}.    
\end{equation}
The following existence theorem is substantial for us \cite[Theorem D, Section 5]{CW06}.
\begin{thm}
    [Chou-Wang]Let $f\in C^\alpha_e(S^n)$ be a positive even function. For any $p\in (-n-1, 1)$, then $F_{p, f}(h)$ attains a global minimum and it satisfies the equation \eqref{lp-02}. Moreover, $h\in C^{2, \alpha}_{+, e}$ and the corresponding convex body  is in $\cK^{2, \alpha}_{+, e}$. 
\end{thm}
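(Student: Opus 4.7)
The plan is to apply the direct method of the calculus of variations. Since $F_{p,f}$ is invariant under positive rescaling $h\mapsto \lambda h$ (both terms scale the same way), it suffices to minimize over the normalized class $\mathcal{A}_e=\{h : h \text{ is an even support function with } V(h)=1\}$. On $\mathcal{A}_e$ the functional reduces to $\frac{1}{p}\log\int_{S^n}h^p\,d\mathfrak{m}_f$ (or $\int\log h\,d\mathfrak{m}_f$ when $p=0$), and I aim to prove that this infimum is attained, that any minimizer satisfies the Euler-Lagrange equation, and that it is $C^{2,\alpha}$.

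The main step, and the principal obstacle, is compactness of a minimizing sequence $h_k\in\mathcal{A}_e$. By the Blaschke selection theorem, it suffices to show $\|h_k\|_\infty$ is bounded above and to prevent $K_k$ from collapsing to a lower-dimensional set. Evenness of $K_k$ removes translational freedom: the John ellipsoid $E_k$ of $K_k$ is centered at the origin and satisfies $E_k\subset K_k\subset\sqrt{n+1}\,E_k$, so its semi-axes $a_1^{(k)}\le\cdots\le a_{n+1}^{(k)}$ control $\min h_{K_k}$ and $\max h_{K_k}$ up to universal constants, while $\prod_i a_i^{(k)}\asymp V(K_k)=1$. Degeneration would force $a_{n+1}^{(k)}\to\infty$ and $a_1^{(k)}\to 0$; on such a sequence, splitting $S^n$ into a small spherical cap near the shortest axis where $h_k\to 0$ and a bulk region where $h_k$ is large, and using positivity of the density of $d\mathfrak{m}_f$, one shows by a direct estimate that $F_{p,f}(h_k)\to+\infty$. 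The threshold $p>-n-1$ is exactly what makes the volume constraint $\prod_i a_i=1$ strong enough to beat the possible blow-up of $\int h^p\,d\mathfrak{m}_f$ near $h=0$; at $p=-n-1$ the two effects balance and for $p<-n-1$ the infimum drops to $-\infty$. Extracting a Hausdorff-convergent subsequence, the limit $K_\infty$ is a nondegenerate even convex body with $V(K_\infty)=1$, and continuity of $F_{p,f}$ on the nondegenerate part of $\cK_e$ (from continuity of volume and dominated convergence for $\int h^p\,d\mathfrak{m}_f$, valid since the limit support function has a uniform positive lower bound) shows that $h_{K_\infty}$ realizes the infimum.

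Criticality then follows from the first variation formula \eqref{1stvariation-01}: the minimizer $h$ satisfies the identity $\bigl(\int h^p\,d\mathfrak{m}_f\bigr)^{-1}h^{p-1}\,d\mathfrak{m}_f=\frac{dS_h}{(n+1)V(h)}$, which is precisely \eqref{lp-02}, and rescaling $h$ by a positive constant (using $0$-homogeneity, possible because $n+1-p>0$) converts this into the Monge-Amp\`ere equation $h^{1-p}\det(\nabla^2h+h\,\text{Id})=f$. For regularity, the right-hand side, rewritten as $c\,f\,h^{p-1}$, is positive and $C^\alpha$ once we know $h>0$ on $S^n$, which holds because $K_\infty$ contains the origin in its interior (a consequence of evenness and nondegeneracy of the limit). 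Caffarelli's regularity theory for Monge-Amp\`ere equations then yields $C^{1,\beta}$ strict convexity of $h$, after which standard Schauder estimates applied to the linearized equation $H_K^{ij}\partial_{ij}(\cdot)+\text{l.o.t.}$ bootstrap $h$ to $C^{2,\alpha}_{+,e}$, placing the associated body in $\cK^{2,\alpha}_{+,e}$.
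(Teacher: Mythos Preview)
The paper does not prove this theorem; it is quoted from \cite[Theorem D, Section 5]{CW06} and used as a black box. Your outline faithfully reproduces the original Chou--Wang variational argument: normalize by volume, use the John ellipsoid together with the hypothesis $p>-n-1$ to obtain coercivity, extract a limit by Blaschke selection, and then apply Caffarelli's regularity theory.

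One technical point deserves a flag. At the stage where you derive the Euler--Lagrange equation, the minimizer $h$ is a priori only the support function of a convex body, not a $C^2$ function, so the first variation formula \eqref{1stvariation-01} as written does not apply directly: differentiating $V(h_t)$ requires Aleksandrov's variational lemma (cf.\ \cite{BLYZ02}, and the discussion in Section~4 of the present paper), which yields \eqref{lp-02} first as an identity of measures. Only after this, with $h$ bounded away from $0$ by evenness and nondegeneracy, do Caffarelli's $C^{1,\beta}$ and then $C^{2,\alpha}$ estimates apply. Your sketch implicitly assumes this but phrases it as though the smooth formula is already available.
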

The second variation of $F$ is computed in \cite{Milman22}. We include here for completeness and we use slghtly different notations. 
The second variation at a critical point is given by \[
\begin{aligned}
    \nabla^2F(h)(h_t,h_t)
    =&\int_{S^n}h_t\frac{d}{dt}\left(h^{p-1}d\mathfrak{m}_f\left({\int_{S^n}h^pd\mathfrak{m}_f}\right)^{-1}-\frac{dS_h}{(n+1)V(h)}\right)\\
    =&\frac{(p-1)}{\int_{S^n}h^pd\mathfrak{m}_f}\int_{S^n}h^{p-2}h_t^2d\mathfrak{m}_f-\frac{p}{(\int_{S^n}h^pd\mathfrak{m}_f)^2}\left(\int_{S^n}h_th^{p-1}d\mathfrak{m}_f\right)^2\\
    &+\frac{1}{(n+1)V(h)}\int_{S^n}H^{ij}h_{ti}h_{tj}dS_h-\frac{1}{(n+1)V(h)}\int_{S^n}H^{ij}\delta_{ij}h_t^2dS_h\\
    &+\frac{1}{(n+1)V(h)^2}\left(\int
_{S^n}h_tdS_h\right)^2\\
=&\frac{(p-1-n)}{(n+1)V(h)}\int_{S^n}z^2hdS_h\\
&+(n+1-p)\left(\int
_{S^n}\frac{zhdS_h}{(n+1)V(h)}\right)^2+\frac{1}{(n+1)V(h)}\int_{S^n}H^{ij}z_{i}z_{j}h^2dS_h,
\end{aligned}
\]where $z=h^{-1}h_t$. 
Note that the functional $F(\lambda h)=F(h)$ for any $\lambda>0$, hence it is natural to consider the convex bodies with a fixed volume, by fixing the scaling. Hence we can consider the convex bodies with the volume fixed,
\[
V(h)=\frac{1}{n+1}\int_{S^n} h\det (\nabla^2 h+h \text{Id})dS=V(B_1)=\frac{\omega_n}{n+1},
\]
where $\omega_n$ is the surface area of $S^n\in \R^{n+1}$. The normalization condition, for $z=h^{-1} h_t$
\begin{equation}\label{normal-002}
    \int_{S^n} z h_KdS_K=0
\end{equation}
is the precisely the variation with the volume fixed.
Hence $\nabla^2F(h)(z,z)\geq 0$ at a critical point $h$, is equivalent to for all $z\in C^{2}_e(S^n)$ satisfying \eqref{normal-002}, one has \[
\int_{S^n}H^{ij}z_iz_jh^2dS_h\geq (n+1-p)\int_{S^n}z^2hdS_h.
\]
The first even eigenvalue $\lambda_{1, e}(-L_K)$ is then defined as follows,
\[
\lambda_{1, e}=\inf_{u\in C^2_e(S^n)} \frac{\displaystyle \int_{S^n}H^{ij}h^2u_iu_j\det(H)dS}{\displaystyle\int_{S^n}{(u-\underline u)^2} h\det(H)dS},
\]
where $u-\underline u \neq 0$ and $\underline u$ is the constant given by
\[
\underline u=\left(\int_{S^n}h\det{H}dS\right)^{-1}\int_{S^n} u h\det(H)dS.
\]

Therefore one gets \begin{prop}[Milman, Proposition 1.4 \cite{Milman22}]
    The local $L^p$-Minkowski Conjecture \ref{locallpminkowski} is equivalent to that the second variation of functional $F_{p,f}$ is nonnegative at critical point $h_K$ for all convex body $K\in \cK_{+,e}^2$, or in short $\lambda_{1, e}\geq n+1-p$.
\end{prop}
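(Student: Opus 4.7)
The plan is to read off the equivalence directly from the second-variation formula for $F_{p,f}$ that was already displayed in the excerpt, once the scale-invariance of the functional is taken into account. Concretely, I would carry out three short steps.

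First, I would record that $F_{p,f}(\lambda h) = F_{p,f}(h)$ for every $\lambda > 0$, so along the scaling family $h_t = h$ (equivalently $z \equiv 1$) the Hessian of $F_{p,f}$ must vanish. Verifying this directly from the formula: for $z \equiv c$ one has $\int_{S^n} z^2 h\, dS_h = c^2(n+1)V(h)$ and $\int_{S^n} z h\, dS_h = c(n+1)V(h)$, so the two terms carrying the coefficient $(n+1-p)$ cancel exactly, while the gradient term vanishes because constants have zero spherical derivative. This cancellation is precisely what makes $(n+1-p)$ the correct coefficient for the quadratic form to descend to the slice transverse to scaling.

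Next, for a general $z \in C^2_e(S^n)$ I would split $z = z_0 + \underline z$, where $\underline z := (\int_{S^n} z\, h\, dS_h)/((n+1)V(h))$ is the constant chosen so that $z_0$ satisfies the volume-preserving condition \eqref{normal-002}. By bilinearity together with the previous step, $\nabla^2 F_{p,f}(h)(z,z) = \nabla^2 F_{p,f}(h)(z_0, z_0)$, and for such $z_0$ the middle term in the second-variation formula drops out, leaving
\[
\nabla^2 F_{p,f}(h)(z_0, z_0) = \frac{1}{(n+1)V(h)}\left(\int_{S^n} H^{ij} z_{0,i} z_{0,j}\, h^2\, dS_h - (n+1-p)\int_{S^n} z_0^2\, h\, dS_h\right).
\]
Thus the Hessian is nonnegative on all of $C^2_e(S^n)$ if and only if \eqref{hessianestimate} holds for every even $z_0$ with $\int_{S^n} z_0\, h\, dS_h = 0$, which is Conjecture \ref{locallpminkowski} at $K$. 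Since every $K \in \cK_{+,e}^2$ arises as a critical point of $F_{p,f}$ for $f = h_K^{1-p}\det(\nabla^2 h_K + h_K\,\text{Id})$, the quantifier over $K$ on both sides agrees.

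Finally, to pass to the $\lambda_{1,e}(-L_K)$ formulation, I would observe that in the Rayleigh-quotient definition the numerator is invariant under $u \mapsto u - \underline u$ (since $H^{ij}(u-\underline u)_i(u-\underline u)_j = H^{ij} u_i u_j$) and the denominator is already written in terms of $u - \underline u$, which is automatically volume-preserving; taking $z_0 = u - \underline u$ matches the two formulations term by term. The only delicate point in the whole argument is the scale-invariance cancellation of the first step, which relies on the precise coefficient $(n+1-p)$; once this is recorded, the rest is bookkeeping.
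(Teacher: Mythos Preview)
Your proposal is correct and follows the same route as the paper: the paper likewise notes the scale-invariance $F(\lambda h)=F(h)$, restricts to volume-preserving variations (the condition \eqref{normal-002}) so that the middle term in the second-variation formula drops out, and then reads off that nonnegativity of $\nabla^2 F_{p,f}$ is exactly the inequality \eqref{hessianestimate}, i.e.\ $\lambda_{1,e}\ge n+1-p$. One small point worth making explicit in your write-up is that scale-invariance forces the constant direction to lie in the \emph{kernel} of the Hessian (not merely $\nabla^2F(c,c)=0$), so the cross term $\nabla^2F(z_0,\underline z)$ indeed vanishes; this is immediate either from the invariance or by polarizing the displayed formula.
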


Given the variational approach, it is natural to study the structure of critical points of $F$, viewing $(p, f)$ as parameters.
We hope it would shed some lights on the structure of critical points of $F$ in particular when the uniqueness fails (for $p<p_0$) in general. 
A basic reference is B. White \cite{White91}.
Fix $\alpha\in (0, 1)$. 
Consider the space of support functions of even convex bodies  $\cH_e^{2, \alpha}$ and the space of even positive functions $C^\alpha_{e, +}$.
We need to consider maps and functionals on $\cH_e^{2, \alpha}$. For $p\in [0, 1)$, recall the map $F: \R\times C^\alpha_{e, +}\times\cH_e^{2, \alpha}\rightarrow \R,$
\[
\begin{split}
  F(p, f, h)=\frac{1}{p}\log \int_{S^n} h^p d\mathfrak{m}_f-\frac{1}{n+1}\log V(h).
\end{split}
\]
Note that a critical point of $F_{p, f}$ satisfies
\begin{equation}\label{normalized-01}
     h^{1-p}\det(\nabla^2 h+h\text{Id})=(n+1)V(h) \left(\int_{S^n}h^pfdS_x\right)^{-1}f.
\end{equation}
 We recall the set up of \cite[Section 1]{White91}, adapted to our setting.  The first variation  of $F(p, f, h)$ in \eqref{1stvariation-01} defines a map $S: \R\times \cH^{2, \alpha}_e\times C^\alpha_{e, +}\rightarrow C^\alpha_e,$
\begin{equation}
 S(p, f, h)=\left(\int_{S^n}h^p fdS\right)^{-1}h^{p-1}f-\left(\int_{S^n} h\det(\nabla^2 h+h\text{Id})\right)^{-1}\det(\nabla^2 h+h\text{Id}).
\end{equation}
A critical point of $F$ is given by the zeros of $S$, $S(p, f, h)=0$. The linearization of $S$ is given by the Frechet derivative of $S$, with $V=V(h)$, 
\begin{equation}
\begin{split}
     dS_h(p, f, h)(u)=&\;(p-1) \left(\int_{S^n}h^p fdS\right)^{-1}h^{p-2} uf-p\left(\int_{S^n}h^p fdS\right)^{-2}h^{p-1}f \int_{S^n} h^{p-1} uf dS\\
     &\;-\frac{\det(\nabla^2 h+h\text{Id})}{(n+1)V} \left(H^{ij}(u_{ij}+u\delta_{ij})-V^{-1}\int_{S^n} u\det(\nabla^2 h+h\text{Id})dS\right).
\end{split}
\end{equation}
Denote $Ju=dS_{h}(p, f, h)(u)$. It is clear then $J$ is a self-adjoint elliptic operator, hence has Fredholm index zero,
\[
\int_{S^n} Ju\cdot  v\; dS=\int_{S^n} u \cdot Jv\; dS.
\]
Given $(p, f_0)$, let $h_0$ be a critical point of $F$ such that $S(p, f_0, h_0)=0$. Note that $F$ is invariant under the scaling for $h$, namely $F(p, f, h)=F(p, f, ch)$ holds for any positive constant $c$. It follows that $ch_0$ is a critical point of $F$ corresponding to scaling of convex bodies, which also implies $J h_0=0$. To discuss the structure of critical points of $F$, we can then restrict to consider convex bodies with fixed volume by modulo the scaling, $(n+1)V(h)=\omega_n$. Hence we consider the functions which satisfy the constraint
\begin{equation}\label{normalized-03}
\int_{S^n} u \det(\nabla^2 h_0+h_0\text{Id}) dS=0.
\end{equation}
At a critical point $(p, f_0, h_0)$ of $F$, with \eqref{normalized-03}, the ``Jacobi" equation $Ju_0=0$ simplifies to 
\begin{equation}\label{jacobi-01}
    -H^{ij}(u_{ij}+u\delta_{ij})+\frac{(p-1)u}{h}=0.
\end{equation}

We check that the hypotheses (C) in \cite[Theorem 1.2]{White91} holds in our setting.  
 let $u_0\neq 0\in \text{Ker}(J)$ satisfy \eqref{normalized-03} and \eqref{jacobi-01}. 
Let $p(s), f(s)$ be a path such that $p(0)=p, \p_sp=q_0,  f(0)=f_0, \p_sf=w$, we compute
\begin{equation}\label{transverse-01}
\begin{split}
    &\left(\frac{\p^2}{\p s\p t}\right)_{s=t=0} F(p(s), f(s), h_0+tu_0)=\left(\frac{\p}{\p s}\right)_{s=0} \int_{S^n} u_0 S(p(s), f(s), h_0) dS\\
    &\quad=c_0\int_{S^n} u_0 h_0^{p-1}(\log h_0\cdot q_0+w)dS-c_0^2\int_{S^n} u_0 h_0^{p-1}f_0dS \int_{S^n} h_0^p(\log h_0\cdot q_0+w)dS\\
    &\quad=c_0\int_{S^n} u_0 h_0^{p-1}(\log h_0\cdot q_0+w)dS
    \end{split}
\end{equation}
where the constant $c_0$ is given by
\[
c_0=\left(\int_{S^n} h_0^p f_0 dS\right)^{-1}.
\]
Clearly there exist $q_0$ and $w$ such that  \eqref{transverse-01} is not zero. Note that if we take $p$ fixed, then $q_0=0$, and there exists $f$ such that \eqref{transverse-01} is not zero as well.  We summarize the results as follows, as a direct application of \cite[Theorem 1.2]{White91}.

\begin{thm}Let $h_0$ be a critical point of $F(p_0, f_0, \cdot)$ such that $S(p_0, f_0, h_0)=0$. Then the map $S$ is a submersion in a neighborhood of $(p_0, f_0, h_0)$ so that there exists a neighborhood $W$ of $(p_0, f_0, h_0)$ such that
\[
\cM=\{(p, f, h)\in W: S(p, f, h)=0\}
\]
is a submanifold of $\R\times  C^{\alpha}_{+, e}\times \cH^{2, \alpha}_{+, e}$ and
\[
T_{(p, f, u)}\cM=\text{Ker} (DS).
\]
The projection $\Pi: \cM\rightarrow \R\times  C^{\alpha}_{e}$ is a Fredholm map of Fredholm index zero. 
\end{thm}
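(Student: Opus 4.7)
The plan is to apply White's abstract structure theorem \cite[Theorem 1.2]{White91} directly to the map $S$. Three hypotheses must be verified: (i) $S$ is a smooth map between the relevant Banach manifolds; (ii) the partial linearization $dS_h$ in the $h$-variable is Fredholm of index zero at $(p_0, f_0, h_0)$; (iii) White's transversality hypothesis (C) holds, i.e.\ every nonzero element of $\ker(dS_h)$ is detected by some variation in the parameter $(p, f)$.

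Step (i) is routine: $S$ is a polynomial combination of $h$, its second derivatives, $f$, and $h^{p-1}=e^{(p-1)\log h}$, each smooth on the open cones of positive functions and strictly convex even support functions. For step (ii), I would first use the scaling invariance $F(p,f,\lambda h)=F(p,f,h)$ to pass to the fixed-volume slice cut out by the linearized constraint \eqref{normalized-03}; this removes the trivial kernel direction generated by $h_0$. On this slice the equation $Ju=0$ simplifies to \eqref{jacobi-01}, a linear second-order elliptic operator whose coefficient matrix $H^{ij}$ is strictly positive definite on $\cH^{2,\alpha}_{+,e}$. It is therefore Fredholm from $C^{2,\alpha}_e$ to $C^\alpha_e$, and the self-adjointness already noted (pairing against $dS$) forces Fredholm index zero.

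The essential content, and what I expect to be the only real obstacle, is the transversality step (iii). Given any nonzero $u_0\in \ker(dS_h)$ satisfying \eqref{normalized-03}, I would differentiate $F(p(s), f(s), h_0+t u_0)$ in $s$ and $t$ at zero along a curve with $(p'(0), f'(0))=(q_0,w)$, arriving at the mixed partial \eqref{transverse-01}. It then suffices to exhibit $(q_0, w)$ making
\[
\int_{S^n} u_0\, h_0^{p-1}(\log h_0\cdot q_0 + w)\,dS \neq 0.
\]
The cleanest choice is $q_0=0$ and $w=u_0$, which produces the strictly positive integral $\int_{S^n} u_0^{2} h_0^{p-1}\,dS>0$. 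This shows that variations in $f$ alone already surject onto $\mathrm{coker}(dS_h)$, so $DS$ is surjective at $(p_0,f_0,h_0)$ and $S$ is a submersion there.

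With all three hypotheses in hand, White's theorem (equivalently, the Banach implicit function theorem, since $S$ is now a genuine submersion) yields that $\cM=S^{-1}(0)$ is a smooth Banach submanifold of $\R\times C^\alpha_{+,e}\times \cH^{2,\alpha}_{+,e}$ near $(p_0,f_0,h_0)$ with tangent space $\ker(DS)$. For the last assertion, I would note that the differential of the projection $\Pi:\cM\to\R\times C^\alpha_e$ has kernel identified with $\ker(dS_h)$ and cokernel with $\mathrm{coker}(dS_h)$ via the surjectivity just established; since $dS_h$ is Fredholm of index zero, so is $\Pi$.
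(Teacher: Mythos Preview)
Your proposal is correct and follows essentially the same route as the paper: verify that $J=dS_h$ is a self-adjoint elliptic operator (hence Fredholm of index zero), mod out the scaling direction via the volume constraint \eqref{normalized-03}, check White's transversality hypothesis (C) by computing the mixed second variation \eqref{transverse-01}, and then invoke \cite[Theorem~1.2]{White91}. The paper's argument is identical in structure; where it simply asserts that ``clearly there exist $q_0$ and $w$'' making \eqref{transverse-01} nonzero, you supply the explicit choice $q_0=0$, $w=u_0$, which is indeed the cleanest witness since $\int_{S^n} u_0^2 h_0^{p-1}\,dS>0$ whenever $u_0\neq 0$.
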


When $p$ is fixed, only $f$ is viewed as a parameter, we have a similar description and  the statement holds by dropping the dependence on $p$.

\section{The proof of the main theorem}
We prove our main theorem in this section. 
First we prove the following.

\begin{thm}\label{uniqueness-01}
    Suppose $p\in [0, 1)$ such that the even $L_p$ Minkowski inequality holds. Then at $p$ the even $L_p$ Minkowski problem has a unique solution when $f$ is a positive even function with uniform $C^\alpha$ bound. 
\end{thm}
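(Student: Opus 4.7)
The plan is to argue by contradiction: assume $h_1 \neq h_2$ are two even smooth solutions of $h^{1-p}\det(\nabla^2 h+h\,\text{Id})=f$. First apply the hypothesized $L_p$-Minkowski inequality \eqref{BM-02} with $(K,L)=(K_1,K_2)$ and then with the roles swapped. Since $h_i^{1-p}\,dS_{K_i}=f\,dS$, the left side of \eqref{BM-02} reduces to $(n+1)V(K_j)/p$, and the two inequalities together force $V(K_1)=V(K_2)=:V_0$ with equality in both applications. Integrating the equation against $h^p$ gives $(n+1)V(h)=\int h^p f\,dS$ for any solution, so the value of $F_{p,f}$ at a solution depends only on its volume; comparison with a Chou--Wang global minimizer (itself a solution) then shows $h_1$ and $h_2$ are both global minimizers of $F_{p,f}$.

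Next, let $h_t$ denote the support function of the $L_p$-sum $(1-t)K_1+_p tK_2$, which by construction satisfies $h_t^p\leq (1-t)h_1^p+th_2^p$ pointwise. Combined with the $L_p$-Brunn--Minkowski inequality $V(K_t)^{p/(n+1)}\geq V_0^{p/(n+1)}$, this sandwiches $F_{p,f}(h_t)$ between the common minimum value $F_{p,f}(h_1)$ and an upper bound that collapses to the same constant. Hence $F_{p,f}(h_t)\equiv F_{p,f}(h_1)$ for all $t\in[0,1]$, and equality must hold in both underlying inequalities: $h_t^p=(1-t)h_1^p+th_2^p$ pointwise on $S^n$ and $V(K_t)\equiv V_0$. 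The Lagrange multiplier at the resulting constant-volume critical point equals $1$, so $h_t^{1-p}\det(\nabla^2 h_t+h_t\,\text{Id})=f$ along the entire path, and elliptic regularity places each $h_t\in\cK^4_{+,e}$.

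Third, differentiating $h_t^{1-p}\det H_t=f$ in $t$ and setting $z_t=\dot h_t/h_t$ yields $-L_{K_t}z_t=(n+1-p)z_t$. Here $z_t$ is nontrivial (since $h_1\neq h_2$), even, and has mean zero against $h_t\,dS_{K_t}$ (from $\partial_t V(K_t)=0$), so $\lambda_{1,e}(-L_{K_t})\leq n+1-p$. The global $L_p$-Brunn--Minkowski hypothesis implies the local version $\lambda_{1,e}(-L_{K_t})\geq n+1-p$ via the second variation of $F_{p,f}$ at the minimizer; equality must therefore hold along the entire path, so each $K_t\in\cK^4_{+,e}$ realizes $\inf_K\lambda_{1,e}(-L_K)=n+1-p_0$ and is a global, hence local, minimum of $K\mapsto\lambda_{1,e}(-L_K)$ on $\cK^4_{+,e}$.

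The main obstacle is the borderline case $p=p_0$. When $p>p_0$ the derivation closes immediately: $n+1-p<n+1-p_0\leq\lambda_{1,e}(-L_K)$ forbids $n+1-p$ from being an eigenvalue at any $K$, contradicting the construction of $z_t$. At $p=p_0$ the eigenvalue identity produces only equality, and one must invoke the separately proved \emph{no local minimum} theorem (Theorem 5 in the introduction) to rule out such a path of bodies realizing the infimum. A smaller technical point is that the pointwise formula $((1-t)h_1^p+th_2^p)^{1/p}$ must define a convex support function on $\mathbb{R}^{n+1}$; this is automatic because $h_t$, being the support function of the Wulff shape, is convex by construction, and the equality derived above forces this coincidence.
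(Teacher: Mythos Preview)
Your outline is correct but takes a genuinely different route from the paper. After establishing (as you do) that the path $h_\lambda=((1-\lambda)h_L^p+\lambda h_K^p)^{1/p}$ consists entirely of solutions, the paper differentiates the equation $h^{1-p}\det(\nabla^2h+h\,\text{Id})=cf$ \emph{twice} in $\lambda$ and extracts a pointwise algebraic identity; for $p=0$ this reads $H^{ij}hw_iw_j=\tfrac{1}{2}H^{ik}H^{jl}a_{ij}a_{kl}$ with $a_{ij}=hw_{ij}+h_iw_j+h_jw_i$, and evaluating at a maximum of $w$ (where $\nabla w=0$) forces $a_{ij}=0$, hence $\nabla^2w=0$, whereupon the first-$\lambda$-derivative equation gives $\max w=0$. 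The case $p\in(0,1)$ is a parallel computation with $w=(h_K^p-h_L^p)/p$ and the maximum principle applied to $h^{-p}w$. This is a self-contained maximum-principle argument that needs only $h_\lambda\in C^2$, which Caffarelli's $C^{2,\alpha}$ regularity supplies from $f\in C^\alpha$; no eigenvalue framework or external theorem is invoked.

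Your approach instead differentiates once, recognizes $z_t=\dot h_t/h_t$ as an even eigenfunction of $-L_{K_t}$ with eigenvalue $n+1-p$ and zero cone-volume mean, combines this with the global bound $\lambda_{1,e}\geq n+1-p$ (which does follow from the hypothesis via the second variation of $F_{p,K}$ at its minimizer $K$) to see that each $K_t$ realizes the infimum of $\lambda_{1,e}$, and then invokes the no-local-minimum theorem for the contradiction. This is conceptually clean and makes the link to the spectral picture explicit, but it comes at two costs. First, it makes Theorem~\ref{uniqueness-01} logically dependent on the no-local-minimum result, whereas in the paper the two are proved independently. Second, and more concretely, the no-local-minimum theorem is stated and proved for $K\in\cK^4_{+,e}$: its Euler--Lagrange computation produces terms of the form $\big[(H^{ij}H^{kl}-H^{ik}H^{jl})h^2z_iz_j\det H\big]_{kl}$, which require two spatial derivatives of expressions already involving $H^{ij}$. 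Since $f\in C^\alpha$ only yields $h_t\in C^{2,\alpha}$, your assertion that ``elliptic regularity places each $h_t\in\cK^4_{+,e}$'' does not follow at the stated generality. Your argument is therefore valid when $f\in C^{2,\alpha}$ (or smooth), but has a regularity gap for merely $C^\alpha$ data that the paper's direct argument avoids.
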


\begin{proof}First note that if the $L_p$ Minkowski inequality holds, then a critical point of $F_{p, f}$ is necessarily an energy minimizer. 
Suppose for some $f$, $F_{p, f}$ has more than one critical point which solves the even $L_p$ Minskowski problem. 
Let $h_L$ and $h_K$ both minimize $F_{p, f}$. Then both $h_L$ and $h_K$ satisfy the Euler-Lagrangian equation (after a normalization condition)
\begin{equation}\label{normal-001}
\left({\displaystyle\int_{S^{n}}h^{p} fdS}\right)^{-1}h^{p-1} f=\frac{\det(\nabla^2 h+h \text{Id})}{(n+1)V(h)}.    
\end{equation}
We can assume $V(h_L)=V(h_K)$. 
By
$F_{p, f}(h_L)=F_{p, f}(h_K)$, it
 implies that 
 \begin{equation}
     \frac{1}{p}\int_{S^{n-1}} h_L^{p} fdS=\frac{1}{p}\int_{S^{n-1}} h_K^{p} fdS,
 \end{equation} 
 where $p=0$ is understood as the limit sense. Hence it follows that $h_L$ and $h_K$ solve the equation
 \begin{equation}
     h^{1-p}\det(\nabla^2 h+h\text{Id})=cf,
 \end{equation}
 where $c$ is the constant taking the form
 \[
 c=(n+1)V(h_L) \left(\int_{S^n} h_L^p fdS\right)^{-1}=(n+1)V(h_K) \left(\int_{S^n} h_K^p fdS\right)^{-1}. 
 \]
 Now denote
\[
q_\l=\left((1-\l) h_L^{p}+\l h_K^{p}\right)^{\frac{1}{p}}.
\]
Recall  $(1-\l)L+_{p}\l K$ is defined by the Alexanderov body associated with $q_\l$,
\[
Q_\l=\{x\in \R^n: x\cdot u\leq q_\l(u)\}. 
\]
Denote the support function of $Q_\l$ by $h_\l$, then $h_\l\leq q_\l$ and $h_\l=q_\l$ a.e with respect to the surface area measure $dS_{Q_\l}$.
Now we compute the volume of $V(Q_\l)$ as in \cite[Lemma 3.1]{BLYZ01},
\begin{equation}
\begin{split}
     V(Q_\l)=&\;\frac{1}{n}\int_{S^{n-1}} h_\l dS_{Q_\l}\\
     =&\;\frac{1}{n}\int_{S^{n-1}} q_\l^p h_\l^{1-p} dS_{Q_\l}\\
     =&\;\frac{1}{n}\int_{S^{n-1}} ((1-\l) h_L^p+\l h_K^p) h_\l^{1-p} dS_{Q_\l}\\
     \geq &\; (1-\l)V(Q_\l)^{\frac{n-p}{n}} V(L)^{\frac{p}{n}}+\l V(Q_\l)^{\frac{n-p}{n}} V(K)^{\frac{p}{n}},
\end{split}
\end{equation}
where in the last step of the above, we apply the $L_p$-Brunn-Minkowski inequality \eqref{BM-02}. 
It follows that $V(Q_\l)\geq V(K)=V(L)$. This implies 
\[
F_{{p}, f}(h_\l)\leq F_{p, f}(h_L)=F_{p, f}(h_K).
\]
Hence $h_\l$ minimizes $F_{p, f}(h)$ and $V(Q_\l)=V(K)=V(L)$ for all $\l\in [0, 1]$ and for $\lambda\in [0, 1]$
\[
\int_{S^n} h_\lambda^p fdS=\int_{S^n} h_L^pfdS.
\]
It follows that \cite[Lemma 4.1]{BLYZ02} $h_\lambda$ solves the equation 
\[
h^{1-p}\det(\nabla^2 h+h I)=cf.
\]
Moreover, $C^{-1}\leq h_\lambda\leq C$, see for example \cite[Lemma 2.1]{CHLL}. 
The regularity theory of Caffarelli \cite{Caff89, Caff90} then implies that $h_\lambda$ is $C^{2, \alpha}$ for each $\lambda\in [0, 1]$ and $dS_{h_\lambda}$ is then given by a $C^\alpha$ potential.  The continuity then implies that  $h_\lambda=q_\lambda$.  
When $p=0$, $h_\lambda=h_K^{1-\lambda}h_L^\lambda$, hence we have
\[
\p_\lambda h= h w, w=\log h_L-\log h_K.  
\]
By taking derivative of 
\[
\log h+\log \det(\nabla^2 h+hI)=\log f,
\] we have
\begin{equation}
    w+H^{ij}( (hw)_{ij}+hw \delta_{ij})=0.
\end{equation}
That is,
\begin{equation}\label{linear-001}
    (n+1)w+H^{ij}(h w_{ij}+h_iw_j+h_jw_i)=0.
\end{equation}
Taking derivative with respect to $\lambda$ one more time, we get
\begin{equation}
    -H^{ik}H^{jl}( (hw)_{ij}+hw \delta_{ij})( (hw)_{kl}+hw \delta_{kl})+ H^{ij}((hw^2)_{ij}+hw^2\delta_{ij})=0.
\end{equation}
Denote $a_{ij}=hw_{ij}+h_iw_j+h_jw_i$ and we have $H^{ij}a_{ij}=-(n+1)w$.
We compute
\begin{equation}
\begin{split}
     H^{ik}H^{jl}( (hw)_{ij}+hw \delta_{ij})( (hw)_{kl}+hw \delta_{kl})=&H^{ik}H^{jl}(H_{ij}w+a_{ij})(H_{kl}w+a_{kl})\\
     =&H^{ik}H^{jl}a_{ij}a_{kl}-(n+2)w^2.
\end{split}
\end{equation}
We also compute
\begin{equation}
     H^{ij}((hw^2)_{ij}+hw^2\delta_{ij})=H^{ij}(w^2 H_{ij}+2w a_{ij}+2hw_iw_{j})=-(n+2)w^2+2H^{ij}hw_iw_j
\end{equation}
This gives
\begin{equation}
    H^{ij} hw_iw_j=\frac{1}{2}H^{ik}H^{jl}a_{ij}a_{kl}.
\end{equation}
At the maximum of $w$, we have $\nabla w=0$.  It follows that, $a_{ij}=0$. This implies $\nabla^2 w=0$. Since $w$ satisfies
\[
H^{ij}(h w_{ij}+h_iw_j+h_jw_i)=-(n+1)w,
\]
this implies $\max w=0$ and hence $w=0$. 
Hence we know that, given $f$, the uniqueness holds for $p=0$. 
For general $p\in (0,1)$, the argument is essentially the same. Let $h_\lambda=[(1-\lambda)h_L^p+\lambda h_K^p]^{\frac{1}{p}}$. Then \[
\partial_\lambda h_\lambda=h_\lambda^{1-p}w,\; \partial_\lambda^2h_\lambda=(1-p)h^{1-2p}_{\lambda}w^2, \quad \text{ where } w=\frac{h_K^p-h_L^p}{p}.
\]Set $A_{ij}=(\partial_\lambda h_\lambda)_{ij}+\partial_\lambda h_\lambda \delta_{ij}$. We compute \[\begin{aligned}
A_{ij}=&(h_\lambda^{1-p}w)_{ij}+h^{1-p}_\lambda w\delta_{ij}\\
=&h_{ij}(h^{-p}w)+h_{i}(h^{-p}w)_j+h_{j}(h^{-p}w)_i+h(h^{-p}w)_{ij}+h^{-p}w\cdot hw\delta_{ij}\\
=&h^{-p}wH_{ij}+h_{i}(h^{-p}w)_j+h_{j}(h^{-p}w)_i+h(h^{-p}w)_{ij}.
\end{aligned}
\]
Take derivative with respect to $\lambda$ for the equation we get \begin{equation}
    (1-p)h^{-p}w+H^{ij}A_{ij}=0.
\end{equation}
So \begin{equation}
    \begin{aligned}
        &H^{ij}(\partial^2_\lambda h_{\lambda ij}+\partial^2_\lambda h_\lambda \delta_{ij})\\
        =&(1-p)H^{ij}\left\{(h\cdot h^{-2p}w^2)_{ij} + h\cdot h^{-2p}w^2\delta_{ij}\right\}   \\
        =&(1-p)H^{ij}\left\{2h\cdot h^{-p}w(h^{-p}w)_{ij}+2h_i(h^{-p}w)_j(h^{-p}w)+2h_j(h^{-p}w)_ih^{-p}w\right\} \\
        &+(1-p)H^{ij}\left\{h_{ij}h^{-2p}w^2+h^{-2p}w^2h\delta_{ij}\right\}+2(1-p)H^{ij}h(h^{-p}w)_{i}(h^{-p}w)_j\\
        =&2(1-p)H^{ij}(A_{ij}h^{-p}w-h^{-p}wH_{ij})+n(1-p)h^{-2p}w^2+2(1-p)H^{ij}h(h^{-p}w)_{i}(h^{-p}w)_j\\
        =&-2(1-p)^2h^{-2p}w^2-2n(1-p)h^{-2p}w^2+2(1-p)H^{ij}h(h^{-p}w)_{i}(h^{-p}w)_j\\
        =&2(p-1)(n+1-p)h^{-2p}w^2+2(1-p)H^{ij}h(h^{-p}w)_i(h^{-p}w)_j
    \end{aligned}
\end{equation}
Therefore taking derivative of the equation again gives \begin{equation}
\begin{aligned}
     0=&-H^{ik}H^{lj}A_{ij}A_{lk}+H^{ij}(\partial^2_\lambda h_{\lambda ij}+\partial^2_\lambda h_\lambda \delta_{ij})\\
     =&-H^{ik}H^{lj}A_{ij}A_{lk}+2(1-p)H^{ij}h(h^{-p}w)_i(h^{-p}w)_j-2(1-p)(n+1-p)h^{-2p}w^2\end{aligned}
    \end{equation}
    It follows that $w=0$ by maximum principle, since at the critical points of $h^{-p}w$, one has \[H^{ik}H^{lj}A_{ij}A_{lk}=-2(1-p)(n+1-p)h^{-2p}w^2\leq 0.\]
\end{proof}
We recall the definition of $p_0$, 
\[
\inf_{K\in \cK^2_{+, e}} \lambda_{1, e}(-L_K)=n+1-p_0.
\]
The result in \cite{KM01} shows that $0\leq p_0<1$. Combining the results in \cite{KM01} and \cite{CHLL}, for any $p\in (p_0, 1)$ and for each positive even smooth function $f$, the even $L_p$ Minkowski problem has a unique solution, 
\[
h^{1-p}\det(\nabla^2 h+h\text{Id})=f.
\]
It implies the $L_p$ Brunn-Minkowski inequality holds for $[p_0, 1)$ in particular.
Note that we can also use the result by Putterman \cite[Propostion 1.5]{Put}, that the $L_p$ Brunn-Minkowski inequlaity holds at $p_0$. 
Hence by Theorem \ref{uniqueness-01} the uniqueness holds at $p_0$ .
Next we prove that $\lambda_{1, e}(-L_K)>n+1-p_0$ if the support function $K\in \cK^4_{+, e}$. 

\begin{proof}
Suppose $K$ takes a local minimum eigenvalue such that  $\lambda_{1, e}(-L_K)=n+1-p_0$. Consider $K_t$ to be a path of convex bodies with $K_0=K$ and $\lambda_{1, e}(L_{K_t})\geq n+1-p_0$. 
Let $z$ be an eigenfunction of $n+1-p_0$ with respect to $-L_{K}$, satisfying
\begin{equation}
    H^{ij}(hz_{ij}+h_iz_j+h_jz_i)=-(n+1-p_0)z.
\end{equation}
Then we define
\begin{equation}
g(t)=\int_{S^n} H^{ij}_t h^2_tz_iz_j \det(H_t) dS-(n+1-p_0) \int_{S^n} h_t z^2 \det{H_t} dS+(n+1-p_0)s(t),
\end{equation}
where $s(t)$ takes the form
\[
s(t)=\left(\int_{S^n}h_t \det(H_t)dS\right)^{-1} \left(\int_{S^n} h_t z \det(H_t) dS\right)^2.
\]
Note that $s(0)=0$, and $s^{'}(0)=0$.
Clearly we have $g(0)=0$ and $g(t)\geq 0$. Hence we have $g^{'}(0)=0$.
Note that we also have $g^{''}(0)\geq 0$. 
We write 
\[
g_1(t)=\int_{S^n} H^{ij}_t h^2_tz_iz_j \det(H_t) dS,\; g_2(t)=\int_{S^n} h_t z^2 \det{H_t} dS.
\]
We first compute, assuming $\p_t h=w$ at $t=0$ for an even function  $w$,
\begin{equation}
\begin{split}
     g_2^{'}(0)=&\int_{S^n} wz^2 \det{H} dS+\int_{S^n} hz^2 H^{kl}(w_{kl}+w\delta_{kl})\det{H} dS\\
     =&\int_{S^n} wz^2\det{H}dS+\int_{S^n} wz^2 H^{kl} h\delta_{kl}dS+\int_{S^n} wH^{kl}(hz^2)_{kl}\det{H}dS
\end{split}
\end{equation}
We compute
\[
\begin{split}
H^{kl}(hz^2)_{kl}=&\;H^{kl}(z^2 h_{kl}+2z(hz_{kl}+h_{k}z_l+h_lz_k)+2hz_kz_l)\\
=&\;z^2 H^{kl} h_{kl}-2(n+1-p_0)z^2+2hH^{kl}z_kz_l.
\end{split}
\]
Hence we have
\begin{equation}
    g^{'}_2(0)=-(n+1-2p_0)\int_{S^n} wz^2\det{H}dS+\int_{S^n} 2w H^{kl}hz_kz_l \det{H} dS.
\end{equation}
The computation for $g_1(t)$ is a bit involved,
\begin{equation}
\begin{split}
    g_1^{'}(0)=&\;\int_{S^n}(H^{ij}H^{kl}-H^{ik}H^{jl})(w_{kl}+w\delta_{kl})h^2z_iz_j\det{H}+\int_{S^n}2w H^{ij}hz_iz_j\det{H}\\
    =&\;\int_{S^n} w \left[(H^{ij}H^{kl}-H^{ik}H^{jl})h^2 z_iz_j\det{H}\right]_{kl}+\int_{S^n}2w H^{ij}hz_iz_j\det{H}\\&\;+\int_{S^n} w (H^{ij}H^{kl}-H^{ik}H^{jl})\delta_{kl}h^2z_iz_j\det{H}.
    \end{split}
\end{equation}
It follows that
\begin{equation}
\begin{split}
      g^{'}(0)=&\;\int_{S^n} w \left[(H^{ij}H^{kl}-H^{ik}H^{jl})h^2 z_iz_j\det{H}\right]_{kl}+\int_{S^n} w (H^{ij}H^{kl}-H^{ik}H^{jl})\delta_{kl}h^2z_iz_j\det{H}\\
      &\;-\int_{S^{n}} 2(n-p_0) w H^{ij}hz_iz_j\det{H}+(n+1-p_0)(n+1-2p_0)\int_{S^n} wz^2 \det{H}.
\end{split}
\end{equation}
Hence we derive the Euler-Lagrangian equation 
\begin{equation}
\begin{split}
    &\left[(H^{ij}H^{kl}-H^{ik}H^{jl})h^2 z_iz_j\det{H}\right]_{kl}+(H^{ij}H^{kl}-H^{ik}H^{jl})\delta_{kl}h^2z_iz_j\det{H}\\&\;-2(n-p_0)  H^{ij}hz_iz_j\det{H}
    +(n+1-p_0)(n+1-2p_0) z^2\det{H}=0.
    \end{split}
\end{equation}
Now we consider at a maximum point of $z$, we have $z_i=0$, and hence the equation reduces to 
\begin{equation}
    \left[(H^{ij}H^{kl}-H^{ik}H^{jl})h^2 z_iz_j\det{H}\right]_{kl}+(n+1-p_0)(n+1-2p_0) z^2\det{H}=0.
\end{equation}
Since $(z_iz_j)_k=z_{ik}z_j+z_{i}z_{jk}=0$ at the point, hence the equation reduces further to
\begin{equation}
    (H^{ij}H^{kl}-H^{ik}H^{jl})h^2 (z_iz_j)_{kl}+(n+1-p_0)(n+1-2p_0) z^2=0.
\end{equation}
Now we compute, at the point, 
\[
(z_iz_j)_{kl}=z_{ikl}z_j+z_{ik}z_{jl}+z_{il}z_{jk}+z_iz_{jkl}=z_{ik}z_{jl}+z_{il}z_{jk}.
\]
Hence we get 
\[
(H^{ij}H^{kl}-H^{ik}H^{jl})h^2 (z_{ik}z_{jl}+z_{il}z_{jk})+(n+1-p_0)(n+1-2p_0) z^2=0.
\]
We compute, at the point, 
\[
H^{ik}z_{ik}h=-(n+1-p_0)z.
\]
Hence we get
\[
H^{ij}H^{kl}h^2 z_{ik}z_{jl}=p_0(n+1-p_0)z^2.
\]
However, we have
\[
H^{ij}H^{kl}h^2 z_{ik}z_{jl}\geq \frac{(H^{ij}hz_{ij})^2}{n}=\frac{(n+1-p_0)^2z^2}{n}.
\]
This gives that
\[
(n+1)(1-p_0)z^2\leq 0.
\]
 For $p_0<1$, this gives $\max z=0$ and hence $z=0$. Contradiction. This proves that 
 \[
 \lambda_{1, e}(-L_K)>n+1-p_0.
 \]
 \end{proof}

Our result gives a characterization of the uniqueness of even $L_p$ Minkowski problem for $p\in [0, 1)$, in terms of $\inf \lambda_{1, e}$. 
The conjecture \cite[Section 5]{KM01} asserts that $\inf \lambda_{1, e}=n+1$ ($p_0=0$). 
There are many evidences for the conjecture to hold in general, see \cite{KM01, Milman23, B24} for more discussions. Note that the conjecture holds for $n=1$ \cite{BLYZ01} and all known non-uniqueness results require $p<0$.   The $L_\infty$ estimate in \cite{CHLL} provides another evidence for $p_0=0$, which only works for $p\geq 0$ and fails for $p<0$ \cite{JLW}. We include a modification of their $L_\infty$ estimate below. 
\begin{lemma}\cite[Lemma 2.1]{CHLL}
    Let $p\in [0, 1]$. Suppose $K\in \cK_e$ satisfies \eqref{normalized-01} with $(n+1)V(K)=\omega_n$. Suppose there exists a positive constant $C_0$ such that
    $C_0^{-1}\leq f\leq C_0$. Then there exists a positive constant $C_1$ depending only $C_0$ and $n$ such that
        $C_1^{-1}\leq h_K\leq C_1$.     
\end{lemma}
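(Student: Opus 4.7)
The plan is to bound $M:=\max h_K$ from above via the PDE together with two global inequalities (Blaschke--Santal\'o and John's theorem), and then to read off $\min h_K\geq C^{-1}$ from the volume normalization. First I rewrite \eqref{normalized-01} as $h^{1-p}\det(\nabla^2 h+h\,\mathrm{Id})=cf$ with $c=(n+1)V(K)\bigl(\int_{S^n}h^p f\,dS\bigr)^{-1}$, and I let $M=h(x_0)=\max h$. Origin-symmetry forces $\{\pm Mx_0\}\subset K$, hence $h(x)\geq M|x\cdot x_0|$ on $S^n$; so $\int h^p f\,dS\geq C_0^{-1}c_n M^p$ with $c_n:=\inf_{q\in[0,1]}\int_{S^n}|x\cdot x_0|^q\,dS>0$, giving $c\leq C(n,C_0)M^{-p}$ uniformly in $p$.

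Next I extract a surface-area bound by integrating the PDE against the constant $1$:
\[
S(K)=\int_{S^n}\det(\nabla^2 h+h\,\mathrm{Id})\,dS=c\int_{S^n}h^{p-1}f\,dS\leq C_0\, c\int_{S^n}h^{p-1}\,dS.
\]
For origin-symmetric $K$ with $V(K)=V(B_1)$, Blaschke--Santal\'o yields $V(K^*)\leq V(B_1)$, which via $\rho_{K^*}=h_K^{-1}$ reads $\int h^{-(n+1)}\,dS\leq \omega_n$; H\"older's inequality with conjugate exponents $(n+1)/(1-p)$ and $(n+1)/(n+p)$ then produces $\int h^{p-1}\,dS\leq \omega_n$ for every $p\in[0,1]$. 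Combining, $S(K)\leq C(n,C_0)\,M^{-p}$.

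To close the loop I would invoke John's theorem: there is an origin-symmetric ellipsoid $E$ with $E\subset K\subset\sqrt{n+1}\,E$, and the volume comparison pins the product $\prod a_i$ of its semi-axes $a_1\leq\cdots\leq a_{n+1}$ to a fixed dimensional interval. Monotonicity of surface area under inclusion of convex bodies together with the standard projection-shadow inequality give $S(K)\geq S(E)\geq 2V_n(\pi_1 E)\geq c_n/a_1$ (the shadow being the $n$-dimensional ellipsoid perpendicular to the shortest axis); combined with $\prod a_i\leq 1$ this forces $a_{n+1}\leq a_1^{-n}\leq C_n S(K)^n$, hence
\[
M\leq\sqrt{n+1}\,a_{n+1}\leq C(n,C_0)\,M^{-np},
\]
so $M^{1+np}\leq C$ and $\max h\leq C_1(n,C_0)$. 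The lower bound then follows in one line: with $m=\min h=h(y_0)$, $K$ lies inside the slab $\{|x\cdot y_0|\leq m\}\cap B_{C_1}$, so $V(B_1)=V(K)\leq 2m\cdot c_n C_1^n$, hence $\min h\geq C_1^{-1}$ after enlarging $C_1$.

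The main subtlety that drives the choice of tools is the uniformity of all constants across $p\in[0,1]$. The naive attempt to bound $\int h^{p-1}\,dS$ by $m^{p-1}\omega_n$ would insert $m=\min h$ into the estimate and create a circular dependence on the lower bound we ultimately want, which is why Blaschke--Santal\'o (an estimate independent of $m$) is the right input; both the support-function constant $c_n$ in the first step and the ellipsoid constant $C_n$ in the third are continuous and positive on the compact parameter range $p\in[0,1]$, so the entire argument closes uniformly.
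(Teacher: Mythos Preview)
Your argument is correct and self-contained, but it differs substantially from the paper's route. The paper does not re-prove the estimate; instead it observes that if $h_K$ solves the normalized equation \eqref{normalized-01} with $(n+1)V(K)=\omega_n$, then $ah_K$ solves the unnormalized equation $h^{1-p}\det(\nabla^2 h+h\,\mathrm{Id})=f$ for the scaling factor $a$ determined by $\int_{S^n} h_K^p f\,dS=a^{n+1-p}\omega_n$, invokes \cite[Lemma~2.1]{CHLL} as a black box to get $C^{-1}\leq ah_K\leq C$, and then checks that $a$ itself is uniformly bounded via the scaling relation. Your approach instead rebuilds the $L^\infty$ estimate from first principles---Blaschke--Santal\'o to control $\int h^{p-1}\,dS$ uniformly in $p$, Cauchy's projection inequality and John's theorem to tie $\max h$ back to the surface area---and is essentially a direct proof in the spirit of what the cited CHLL lemma does internally. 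The paper's route is one line once the citation is granted; yours buys independence from the external reference and makes the uniformity in $p\in[0,1]$ explicit (notably in the step where $\int h^{p-1}\,dS\leq\omega_n$ is obtained without reference to $\min h$).
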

\begin{proof}
 If $h_K$ solves \eqref{normalized-01} with $(n+1)V(K)=\omega_n$, then $a h_K$ solves  the equation
    \[
    h^{1-p}\det(\nabla^2 h+h\text{Id})=f,
    \]
    for a positive constant $a$ satisfying 
    \begin{equation}\label{normalized-02}
        \int_{S^n} h_K^p fdS_x=a^{n+1-p} \omega_n.
    \end{equation}
    By \cite[Lemma 2.1]{CHLL}, there exists a positive constant $C$ depending only on $C_0, n$ such that $C^{-1}\leq a h_K\leq C$. Together with \eqref{normalized-02}, $a$ is then a uniformly bounded constant. This implies that $h_K$ satisfies the desired estimates. 
\end{proof}

\section{The first even eigenvalue}

First we discuss a totally bound Radon measure associated with a convex body $K\in \cK_e$. Given a function $u\in C^2(S^n)$, we consider the following measure 
defined by $K$ and $u$,
\[
\mathfrak{m}_{K, u}=U^{ij}u_iu_j dS,
\]
where $U^{ij}$ is the cofactor of $H_{ij}=h_{ij}+h\delta_{ij}$. When $H$ is invertible, $U^{ij}=H^{ij}\det{H}$. We have the following, 

\begin{thm}Fix a function $u\in C^2(S^n).$ Suppose $K_i\in\cK^2_{+, e}$ converges to $K$ in the Hausdorff topology, then $\mathfrak{m}_{K_i, u}$ converges weakly to $\mathfrak{m}_{K, u}$. 
\end{thm}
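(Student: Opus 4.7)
My plan is to realize $\mathfrak{m}_{K,u}$ as a pushforward to $S^n$ of a mixed Monge--Amp\`ere current on $M=\R^{n+1}\times\mathbb{T}^{n+1}$, exactly as foreshadowed in the paragraph preceding Conjecture~1 of Section~1, and then invoke the Bedford--Taylor continuity theorem for Monge--Amp\`ere operators. For each $K\in\cK_e$ I extend $h_K$ by $1$-homogeneity to a convex function on $\R^{n+1}$ and pull it back trivially along the torus factor; this yields a locally bounded plurisubharmonic function on $M$. I also extend $u$ to $M\setminus\{r=0\}$ by $u(z)=u(x/|x|)$, so that $\sqrt{-1}\partial u\wedge\bar\partial u$ is a fixed smooth $(1,1)$-form there. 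With $B_K=\sqrt{-1}\partial\bar\partial h_K+\sqrt{-1}\partial r\wedge \bar\partial r$, the current $B_K$ is closed, positive, and has locally bounded psh potential.

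The next step is to exploit the pointwise identity recalled in Section~1,
\[
c_n\,\sqrt{-1}\partial u\wedge\bar\partial u\wedge B_K^{\,n} = H^{ij}u_iu_j h^2\det(H_K)\,(\sqrt{-1})^{n+1}\,dZ\wedge d\bar Z,
\]
to identify $\mathfrak{m}_{K,u}$ as a pushforward. Fix once and for all a smooth radial cutoff $\chi\ge 0$ compactly supported in $\{r>0\}$ with $\int_0^\infty \chi(r)\,r^{n-1}\,dr=1$. Then, using the torus-invariance of both sides and the explicit $r$-dependence coming from the $1$-homogeneity of $h_K$, one verifies that for every $K\in\cK_e$ and every $\varphi\in C(S^n)$,
\[
\int_{S^n}\varphi\, d\mathfrak{m}_{K,u}
= c \int_M (\pi^*\varphi)\,\chi(r)\,\sqrt{-1}\partial u\wedge\bar\partial u\wedge B_K^{\,n},
\]
where $\pi:M\to S^n$ sends $(x,y)$ to $x/|x|$ and $c$ is a universal constant independent of $K$.

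Hausdorff convergence $K_i\to K$ is equivalent to uniform convergence $h_{K_i}\to h_K$ on $S^n$, hence by $1$-homogeneity to locally uniform convergence of the pulled-back psh potentials on $M$ with a uniform local bound. The Bedford--Taylor continuity theorem for Monge--Amp\`ere operators of locally bounded plurisubharmonic functions then gives $B_{K_i}^{\,n}\to B_K^{\,n}$ weakly as currents on $M$. Wedging with the fixed smooth form $\sqrt{-1}\partial u\wedge\bar\partial u$ preserves weak convergence, and pairing with the continuous compactly supported test form $\chi(r)\pi^*\varphi$ yields, through the pushforward identity above, $\int\varphi\, d\mathfrak{m}_{K_i,u}\to\int\varphi\, d\mathfrak{m}_{K,u}$ for every $\varphi\in C(S^n)$. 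This is precisely the claimed weak convergence of Radon measures on $S^n$.

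The main technical hurdle is not in the application of Bedford--Taylor, which is by now standard, but in the careful verification of the pushforward identity in the middle paragraph, together with the handling of the singular locus $\{r=0\}$ where the extended $h_K$ is only Lipschitz and the extended $u$ is undefined. The cutoff $\chi$ takes care of the singularity, and its support can be fixed once and for all thanks to the $1$-homogeneity of the underlying identity in $r$; what remains is a somewhat lengthy but routine coordinate computation leveraging the Reinhardt-type structure of $M=\R^{n+1}\times\mathbb{T}^{n+1}$, which decouples the radial, angular, and toric directions in the Monge--Amp\`ere form.
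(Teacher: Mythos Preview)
Your proposal is correct and takes essentially the same approach as the paper: both lift to $M=\R^{n+1}\times\mathbb{T}^{n+1}$, identify $\mathfrak{m}_{K,u}$ as the pushforward of a mixed Monge--Amp\`ere current via the complex embedding and the identity recalled in Section~1, and then appeal to pluripotential-theoretic continuity under locally uniform convergence of the psh potentials $h_{K_i}\to h_K$. The only difference is in packaging: where you invoke Bedford--Taylor as a black box, the paper carries out the estimate by hand---integrating by parts to move $\ddc$ off $h_1-h_2$ and applying the Chern--Levine--Nirenberg inequality---which yields the slightly stronger quantitative bound $\bigl|\int\varphi\,d\mathfrak{m}_{K_1,u}-\int\varphi\,d\mathfrak{m}_{K_2,u}\bigr|\le C\|h_{K_1}-h_{K_2}\|_{L^\infty}$ rather than mere weak convergence.
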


\begin{proof}

Choose $e_1,e_2,...,e_n,e_{n+1}$ to be an orthonormal frame of $\mathbb{R}^{n+1}$ such that $e_1,e_2,...,e_n$ is tangent to $S^n$ and $e_{n+1}$ is the outer unit normal when restricted to $S^n$. We use $D$ and $\nabla$ to denote the covariant derivatives in $\mathbb{R}^{n+1}$ and $S^n$ respectively. Then, for $1\leq i,j\leq n$, $h$ a homogeneous function of degree $1$, \[
D^2_{e_i,e_j}h=D_{e_i}D_{e_j}h-D_{D_{e_i}e_j}h=\nabla_{e_i}\nabla_{e_j}h-D_{\nabla_{e_i}e_j+\langle D_{e_i}e_j,e_{n+1}\rangle e_{n+1}}h=\nabla^2_{e_i,e_j}h+h\delta_{ij}.
\]In the above we have used that the second fundamental form of the sphere $S^n$ is $\langle D_{e_i}e_j,e_{n+1}\rangle=-\delta_{ij},$ and $h$ satisfies $D_{e_{n+1}}h=h$ on $S^n$ since $h$ is homogeneous of degree $1$. Moreover, if we take twice derivative with respect to $t$ at $t=1$ in the equation 
$h(tx)=th(x)$, we get $D^2_{e_{n+1},e_{n+1}}h(x)=0$ for $x\in S^n$. For the mixed derivative, \[
D^2_{e_i,e_{n+1}}h=D_{e_i}D_{e_{n+1}}h-D_{D_{e_i}e_{n+1}}h=D_{e_i}h-\sum_jD_{\langle D_{e_i}e_{n+1},e_j\rangle e_j}h=D_{e_i}h-\sum_j\delta_{ij}D_{e_j}h=0.
\]
So the Hessian of $h$ is given by \[
D^2h(tx)=\begin{pmatrix}
    \frac{1}{t}(\nabla^2 h+h\text{Id})(x)&0\\
    0&0
\end{pmatrix}\]
Let $r=|x|$, then we consider the symmetric $2$-form $A:=D^2h+dr\otimes dr.$ Embed $\mathbb{R}^{n+1}$ into $\mathbb{R}^{n+1}\times \mathbb{T}^{n+1}$, extend $h,r$ constantly on the $\mathbb{T}^{n+1}$ fibers. Let $x$ be the coordinate for $\mathbb{R}^{n+1}$ and $y$ for $\mathbb{T}^{n+1}$. With the natural complex structure $z_\alpha=x_\alpha+\sqrt{-1}y_\alpha$, we compute \[\partial_\alpha\bar{\partial}_\beta h=\frac{1}{4}D^2h(\partial_{x_\alpha},\partial_{x_\beta}).\] Let $B$ be the $(1, 1)$ form defined by  \[B=\sqrt{-1}B_{\alpha\bar{\beta}}dz_\alpha\wedge d\bar{z}_\beta:=\sqrt{-1}\partial\bar{\partial}h+\sqrt{-1}\partial r\wedge\bar{\partial}r.\] We get $4B_{\alpha\bar{\beta}}=A_{\alpha\beta}$. Now for a smooth function $u$ on $S^n$, we extend it to be a homogeneous function of degree $0$ to $\mathbb{R}^{n+1}$ first and further constantly on the $\mathbb{T}^{n+1}$ fibers. Then on $S^n$, we have \[
H^{ij}u_iu_j\det(H)=A^{\alpha\beta}u_\alpha u_\beta \det A= 4^{n}B^{\alpha\bar{\beta}}u_\alpha u_{\bar{\beta}}\det B.
\]
We write the standard volume form on $\R^{n+1}\times \mathbb{T}^{n+1}$ as
\[
dv=\left(\frac{\sqrt{-1}}{2}\right)^{n+1}dz_1\wedge d\bar{z}_1\wedge\cdots\wedge dz_{n+1}\wedge d\bar{z}_{n+1}.
\]
Then we have 
\[(n+1)\sqrt{-1}\partial u\wedge\bar{\partial}u\wedge B^n=2^{n+1}(n+1)!B^{\alpha\bar{\beta}}u_\alpha u_{\bar{\beta}}\det B dv.\]
Since $dx=r^{n}drdS$, where $dS$ is the area measure of $S^n$, we have  \[dv=dx\wedge dy=r^{n}drdSdy.\] 
By the homogeneity, we know $(A^{\alpha\beta}u_\alpha u_\beta\det A)(rx)=r^{-n-1}(A^{\alpha\beta}u_\alpha u_\beta\det A)(x).$ Therefore, \[
r\sqrt{-1}\partial u\wedge\bar{\partial}u\wedge B^n(rx)/(dr\wedge dS^n\wedge dy) \text{ is independent of }r.
\]
It follows that, for a constant depending only on $n$, 
\[
\int_{S^n}H^{ij}u_iu_j\det HdS=c_n\int_0^2\int_{\mathbb{T}^{n+1}}\int_{S^n}\chi(r)r\sqrt{-1}\partial u\wedge\bar{\partial} u\wedge B^n(rx),
\] where $\chi(r)$ is a nonnegative smooth function supported in $[\frac{1}{2},2]$ satisfying  \[\int_0^2\chi(r)dr=1.\]
Note that we  have $B^n=(\ddc h)^n+n(\ddc h)^{n-1}\wedge\sqrt{-1}\partial r\wedge\bar{\partial}r$. Since $\sqrt{-1}\partial \bar\partial h(\p_r, \cdot)=0$, we have, for a function $u$ independent of $r$, 
\[
\sqrt{-1}\partial u\wedge\bar{\partial} u\wedge B^n=n(\ddc h)^{n-1}\wedge\sqrt{-1}\partial r\wedge\bar{\partial}r\wedge \sqrt{-1}\partial u\wedge \bar\partial u.
\]
This gives the following, for $M=\R^{n+1}\times \mathbb{T}^{n+1}$, 
\begin{equation}\label{identity-001}
    \int_{S^n}H^{ij}u_iu_j\det HdS=c(n)\int_{M} \chi{(r)} r (\sqrt{-1}\partial r\wedge \bar\partial r)\wedge (\sqrt{-1}\partial u\wedge \bar\partial u)\wedge (\sqrt{-1}\partial\bar\partial h)^{n-1}.
\end{equation}
Note that the constant $c(n)$ does not affect our results and our argument, and we can choose a normalization such that the torus has a fixed volume, making the the constant $c(n)=1$. 

\begin{lemma}[Chern–Levine–Nirenberg inequality]
    Let $\beta$ be the standard positive $(1,1)$-form $\beta=\sum_{i=1}^{n+1}\frac{1}{2}\sqrt{-1}dz^i\wedge d\bar{z}^i$ on $\mathbb{R}^{n+1}\times \mathbb{T}^{n+1}$, and $h_1,..., h_k\in C^2$ such that $\ddc h_i\geq 0$. For any compact set $K$, there exists a constant $C$ depending on $K,n,k$ such that  \[
    \int_{K}\beta^{n+1-k}\wedge(\ddc h_1)\wedge\cdots \wedge(\ddc h_k)\leq C||h_1||_{L^\infty}\cdots ||h_k||_{L^\infty}
    \]
\end{lemma}
Now given two $C^2$ support functions $h_1, h_2$,  we can estimate the difference of two such integrals as follows.
Denote, for any smooth test function $\varphi\in C^\infty(S^n)$, 
\begin{equation}
    I:=\left|\int_{S^n}\varphi H_1^{ij}u_iu_j\det H_1dS^n-\int_{S^n}\varphi H_2^{ij}u_iu_j\det H_2dS^n\right|.
\end{equation}
We compute, using \eqref{identity-001} with $c(n)=1$, 
\[
\begin{split}
   I=&\left|\int_{M}\chi(r)r\varphi\sqrt{-1}\partial u\wedge\bar{\partial} u\wedge\sqrt{-1}\partial r\wedge\bar{\partial}r\wedge((\ddc h_1)^{n-1}-(\ddc h_2)^{n-1})\right|\\
   =&\left|\int_{M}\chi(r)r\varphi\sqrt{-1}\partial u\wedge\bar{\partial} u\wedge\sqrt{-1}\partial r\wedge\bar{\partial}r\wedge (\sqrt{-1}\partial\bar\partial(h_1-h_2))\wedge \Theta\right|\\
   =&\left|\int_M (h_1-h_2) \ddc(\chi(r)r\varphi\sqrt{-1}\partial u\wedge\bar{\partial} u\wedge\sqrt{-1}\partial r\wedge\bar{\partial}r)\wedge \Theta \right|\\
   \leq &C\|h_1-h_2\|_{L^\infty}\int_M\beta^3\wedge \Theta ,
\end{split}
\]where we write \[(\ddc h_1)^k-(\ddc h_2)^k=\ddc(h_1-h_2)\wedge \Theta, \; \Theta=\sum_{i=1}^{k-1}(\ddc h_1)^{i}(\ddc h_2)^{k-1-i},\]
and \[
-C\beta^3\leq \ddc(\chi(r)\varphi\sqrt{-1}\partial u\wedge\bar{\partial} u\wedge\sqrt{-1}\partial r\wedge\bar{\partial}r)\leq C\beta^3,
\]for some constant $C$ depending on $\chi, ||\varphi||_{C^2},||u||_{C^2}.$

We get the above difference $I$ is bounded by, from the Chern–Levine–Nirenberg inequality, \[
I\leq C(\chi,||\varphi||_{C^2}, ||u||_{C^2}, ||h_1||_{L^\infty},||h_2||_{L^\infty},n)||h_1-h_2||_{L^\infty}.
\]
This proves that the weak convergence of the measures $H^{ij}u_iu_j\det HdS^n$ when the support functions converge in $L^\infty$. So for any convex body with support $h\in C(S^n)$, one can define the measure $H^{ij}u_iu_j\det HdS$ as the weak limit of any smooth approximations $H_k^{ij}u_iu_j\det H_kdS^n$ for $h_k\in C^\infty$ and $h_k\rightarrow h$ in $L^\infty$. And by the Chern-Levine-Nirenberg inequality, the total measure $||H^{ij}u_iu_j\det HdS||$ is uniformly bounded by constant depending on $||u||_{C^2},||h||_{L^\infty},n.$ 
\end{proof}

\begin{Remark}
    Using the complex notations and plurisubharmonic functions to define the measures is only technical and auxiliary. The algebraic structure behind the operator is the well-known mixed volume in convex geometry  and it corresponds to, essentially, the wedge product of corresponding matrices. The choice of complex notations is to use wedge product of forms, which is more transparent than the wedge product of matrices.  
\end{Remark}

Similarly, $U^{ij}\delta_{ij}dS$ is also a well-defined Radon measure for a convex body $K\in \cK_e$. 
We  now recall two equivalent  definitions of  the first even eigenvalue of $-L_K$ for any $K\in \cK_e$ as follows, when the convex body is in $\cK^{2}_{+, e}$. The first is the eigenvalue of the operator $-L_K$, 
\begin{equation}
    \lambda_{1, e}(-L_K)=\inf_{u\in C^2_e(S^n)} \frac{\displaystyle \int_{S^n}H^{ij}h^2u_iu_j\det(H)dS}{\displaystyle\int_{S^n}{(u-\underline u)^2} h\det(H)dS}.
\end{equation}
for $u-\underline u\neq 0$. The second is essentially taking $z=hu$ in the above, and define
\begin{equation}\label{eigenvalue-003}
    \lambda_{1, e}(K)=\inf_{z\in C^2_e(S^n)}\frac{\displaystyle \int_{S^n}H^{ij}z_iz_j\det(H)dS-\int_{S^n}H^{ij}\delta_{ij}z^2 \det{H}dS+n\int_{S^n}z^2h^{-1}\det{H}dS}{\displaystyle\int_{S^n}z^2h^{-1}\det(H)dS-\left(\int_{S^n}h\det{H}dS\right)^{-1}\left(\int_{S^n}z\det{H}dS\right)^2}.
\end{equation}
By our results above, both definitions extend to nonsmooth even convex bodies. Next, we prove that the two formulations are equivalent. 
The following is straightforward.
\begin{prop}
    Suppose $K_k\in \cK^2_{+, e}$ converges to $K_i$ in the Hausdorff topology, then $\lambda_{1, e}(-L_K)\geq \limsup \lambda_{1, e}(-L_{K_i})$. The same holds true for $\lambda_{1, e}(K)$.
\end{prop}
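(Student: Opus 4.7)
The plan is to prove upper semicontinuity by the usual ``test function'' argument for infima of continuous functionals. Given $K_i \in \cK^2_{+,e}$ converging to $K$ in the Hausdorff topology (equivalently, $h_{K_i} \to h_K$ uniformly on $S^n$), fix $\epsilon > 0$ and choose $u \in C^2_e(S^n)$ with $u - \underline{u}_K \neq 0$ such that the Rayleigh quotient at $K$ evaluated at $u$ is at most $\lambda_{1,e}(-L_K) + \epsilon$. The whole point is then to show that the same $u$ is an admissible test function for $K_i$ for large $i$ and that the Rayleigh quotient at $K_i$ with $u$ converges to that at $K$ with $u$; combining gives $\limsup_i \lambda_{1,e}(-L_{K_i}) \le \lambda_{1,e}(-L_K) + \epsilon$, and sending $\epsilon \to 0$ finishes the proof.

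For the numerator $\int H^{ij}_{K_i} h_{K_i}^2 u_i u_j \det(H_{K_i}) \, dS = \int h_{K_i}^2 \, d\mathfrak{m}_{K_i, u}$, I would combine two facts already in hand: first, the previous theorem gives weak convergence $\mathfrak{m}_{K_i, u} \to \mathfrak{m}_{K, u}$ with total variations uniformly bounded via Chern--Levine--Nirenberg; second, $h_{K_i}^2 \to h_K^2$ uniformly. Splitting
\[
\int h_{K_i}^2 \, d\mathfrak{m}_{K_i, u} - \int h_K^2 \, d\mathfrak{m}_{K, u} = \int (h_{K_i}^2 - h_K^2)\, d\mathfrak{m}_{K_i, u} + \left(\int h_K^2\, d\mathfrak{m}_{K_i, u} - \int h_K^2\, d\mathfrak{m}_{K, u}\right)
\]
shows both pieces tend to zero: the first by the $L^\infty$ estimate on $h_{K_i}^2 - h_K^2$ against a uniformly bounded total mass, the second by weak convergence since $h_K^2$ is continuous. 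For the denominator $\int (u - \underline{u}_{K_i})^2 h_{K_i} \, dS_{K_i}$, I would use the classical weak convergence of surface area measures $dS_{K_i} \to dS_K$ under Hausdorff convergence (which also fits neatly into the same pluripotential framework). This, together with $h_{K_i} \to h_K$ uniformly, gives $\underline{u}_{K_i} \to \underline{u}_K$, so $(u - \underline{u}_{K_i})^2 h_{K_i}$ converges uniformly and the whole denominator converges; moreover, since $u - \underline{u}_K \neq 0$, the limit denominator is positive and the same holds for $K_i$ for large $i$, which is what lets $u$ be an admissible test function for $K_i$.

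The proof for $\lambda_{1,e}(K)$ in the second formulation \eqref{eigenvalue-003} is identical in structure once we observe that each of the four measures appearing -- namely $U^{ij}z_iz_j\,dS$, $H^{ij}\delta_{ij}z^2\det(H)\,dS = z^2\sum_i U^{ii}\,dS$, $h^{-1}z^2\det(H)\,dS = h^{-1}z^2\,dS_K$, and $\det(H)\,dS = dS_K$ -- depends continuously (in the weak topology) on $K \in \cK_e$ for a fixed $z \in C^2_e(S^n)$. The measure $U^{ij}\delta_{ij}\,dS$ has already been asserted to be well-defined and continuous by the same complex/pluripotential argument as used in the theorem, and the others reduce to weak convergence of surface area measures combined with uniform convergence of $h_{K_i}$ (bounded away from $0$ on $S^n$).

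I do not anticipate any serious obstacle; the only mild care required is the non-vanishing of the denominator along the sequence, which is handled by $\underline{u}_{K_i} \to \underline{u}_K$ as above. The entire content of the argument is packaged into the weak continuity of the measures established in the preceding theorem and the classical weak continuity of surface area measures, so no new estimates are needed.
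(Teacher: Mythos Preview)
Your proposal is correct and follows essentially the same approach as the paper: fix a near-optimal test function $u$ for $K$, then show that the Rayleigh quotient at $K_i$ with this fixed $u$ converges to that at $K$, using the weak convergence of $\mathfrak{m}_{K_i,u}$ from the preceding theorem for the numerator and weak convergence of surface area measures for the denominator. The only cosmetic difference is that the paper, rather than testing weak convergence directly against the continuous function $h_K^2$ as you do, inserts a smooth approximation $\tilde h$ of $h_K$ and uses a three-term splitting (since its explicit estimate $I$ in the preceding theorem was written for smooth $\varphi$); your two-term split is equally valid once one notes that weak convergence together with the uniform Chern--Levine--Nirenberg mass bound extends to continuous test functions, and your handling of $\underline{u}_{K_i}\to\underline{u}_K$ is in fact slightly more explicit than the paper's.
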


\begin{proof}
By the definition of $\lambda_{1, e}(-L_K)$, it is not clear at all whether there exists an eigenfunction. Fix an $\epsilon>0$. Choose $u\in C^2_e(S^n)$ such that 
\[
\lambda_{1, e}(-L_K)+\epsilon= \int_{S^n}H^{ij}h^2u_iu_j\det(H)dS, \int_{S^n}{(u-\underline u)^2} h\det(H)dS=1.
\]
Fix the smooth function $u\in C^2(S^n)$. First we choose a smooth function $\tilde{h}$, such that $||\tilde{h}-h||_{L^\infty}<\epsilon$. 
For such a sequence $h_k\rightarrow h$ in $L^\infty$, we estimate \[
\begin{aligned}
    II:=&\;|\int_{S^n}(h_k^2H_k^{ij}u_iu_j\det H_k-h^2H^{ij}u_iu_j\det H)dS^n|\\
    \leq &\;\int_{S^n}|h_k^2-\tilde{h}^2|H_k^{ij}u_iu_j\det H_kdS^n+\int_S^n|\tilde{h}^2-h^2|H^{ij}u_iu_j\det HdS^n\\
    &+|\int_{S^n}\tilde{h}^2(H_k^{ij}u_iu_j\det H_k-H^{ij}u_iu_j\det H)dS^n|\\
    \leq &\;C(||h_k-h||_{L^\infty}+\epsilon)||H_k^{ij}u_iu_j\det H_k||+\epsilon||H^{ij}u_iu_j\det HdS^n||\\
    &+|\int_{S^n}\tilde{h}^2(H_k^{ij}u_iu_j\det H_k-H^{ij}u_iu_j\det H)dS^n|.
\end{aligned}
\]Let $k\rightarrow \infty$, one gets \[
II\leq C\epsilon.
\]Since $\epsilon>0 $ is arbitrary, we get that the integral $\int_{S^n}h^2H^{ij}u_iu_j\det HdS^n$ is equal to the limit of $\int_{S^n}h_k^2H_k^{ij}u_iu_j\det H_kdS^n$ when $h_k\rightarrow h$ in $L^\infty$. Moreover, it is straightforward to see that 
\[
\int_{S^n}{(u-\underline u)^2} h_k\det(H_k)dS\rightarrow \int_{S^n}{(u-\underline u)^2} h\det(H)dS.
\]
By definition, we have
\[
\lambda_{1, e}(-L_{K_k})\leq \left(\int_{S^n}{(u-\underline u)^2} h_k\det(H_k)dS\right)^{-1}\int_{S^n}H^{ij}_kh^2_ku_iu_j\det(H_k)dS
\]
Hence it follows that, for a uniformly bounded constant $C$, 
\[
\lambda_{1, e}(-L_K)+\epsilon \geq (1-C\epsilon)\lambda_{1, e}(-L_{K_k})-C\epsilon.
\]
The proof for $\lambda_{1, e}(K)$ is essentially the same. 
This completes the proof. 
\end{proof}

\begin{prop}
    For a convex body $K\in \cK_e$ with the support function $C^{-1}\leq h\leq C$, then $\lambda_{1, e}(-L_K)=\lambda_{1, e}(K)$. 
\end{prop}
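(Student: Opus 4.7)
The plan is to first establish the algebraic identity pointwise for smooth bodies via the substitution $z = hu$, and then extend it to general $K \in \cK_e$ by enlarging the admissible test functions to Lipschitz functions through the pluripotential framework developed in the preceding section. Throughout, denote by $Q_1^K(u)$ and $Q_2^K(z)$ the two Rayleigh quotients whose infima over $C^2_e$ define $\lambda_{1,e}(-L_K)$ and $\lambda_{1,e}(K)$ respectively.

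For $K \in \cK^2_{+, e}$ and $u \in C^2_e$, set $z = hu$, so $z_i = h_i u + h u_i$. Writing $U^{ij} = H^{ij}\det H$ for the cofactor of $H_{ij} = h_{ij}+h\delta_{ij}$, expand
\[
\int_{S^n} U^{ij}z_iz_j\,dS = \int_{S^n}\bigl(u^2 U^{ij}h_ih_j + 2hu\, U^{ij}h_i u_j + h^2 U^{ij}u_iu_j\bigr)\,dS.
\]
Integration by parts on the mixed term, using the divergence-free identity $\nabla_j U^{ij}=0$ for the cofactor on the sphere, yields
\[
\int 2hu\,U^{ij}h_iu_j\,dS = -\int u^2\bigl(U^{ij}h_ih_j + h U^{ij}h_{ij}\bigr)\,dS.
\]
Combined with $U^{ij}H_{ij}=n\det H$ (so $U^{ij}h_{ij}=n\det H - h U^{ii}$), the expansion collapses to
\[
\int_{S^n} h^2 U^{ij}u_iu_j\,dS = \int_{S^n}\bigl(U^{ij}z_iz_j - z^2 U^{ii} + n z^2 h^{-1}\det H\bigr)\,dS,
\]
which matches the numerator of $Q_2^K(z)$ to that of $Q_1^K(u)$. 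The denominators agree by the variance identity applied to the probability measure $(h\det H\,dS)/\int h\det H\,dS$. Since $u\mapsto hu$ bijects $C^2_e$ onto itself when $h \in C^2$ is positive, the two infima coincide, proving the identity in the smooth case.

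For general $K \in \cK_e$ with $h \geq C^{-1}$, approximate by smooth $K_k \in \cK^2_{+, e}$ with $h_k \to h$ in $L^\infty$. The preceding theorem supplies the weak convergence $\int \varphi\,d\mathfrak{m}_{K_k, v}\to \int\varphi\,d\mathfrak{m}_{K, v}$ for $v, \varphi \in C^2$, and analogously for the cofactor-trace measure $U^{ii}\,dS$ and the surface-area measure $\det H\,dS$. Extend both $Q_1^K$ and $Q_2^K$ to Lipschitz test functions via the same Radon-measure integrals (well-defined since $\nabla v \in L^\infty$ for Lipschitz $v$ and the measures are bounded Radon). The smooth identity then propagates: for $u \in C^2_e$ and $z_k := h_k u \in C^2_e$,
\[
Q_1^{K_k}(u) \;=\; Q_2^{K_k}(z_k) \;\longrightarrow\; Q_1^K(u) \qquad (k\to\infty),
\]
and the right side is recognized as the extended value $Q_2^K(hu)$. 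Symmetrically, for $z \in C^2_e$ and $u_k := z/h_k \in C^2_e$, $Q_2^{K_k}(z) = Q_1^{K_k}(u_k) \to Q_2^K(z)$ and the limit equals the extended $Q_1^K(z/h)$.

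The main obstacle is closing the loop via density: one needs $C^2_e$ dense in the class of Lipschitz test functions in the topology making the extended $Q_i^K$ continuous. I expect this to follow from standard mollification of $h$ combined with the Chern–Levine–Nirenberg estimate of the preceding theorem, which controls the measure-convergence error by $L^\infty$ norms of the support functions rather than by higher regularity. Once density is in hand, one obtains
\[
\inf_{z \in C^2_e} Q_2^K(z) \;=\; \inf_{v\text{ Lip}} Q_2^K(v) \;=\; \inf_{u \in C^2_e} Q_2^K(hu) \;=\; \inf_{u \in C^2_e} Q_1^K(u),
\]
i.e., $\lambda_{1,e}(K) = \lambda_{1,e}(-L_K)$ on all of $\cK_e$ under the stated hypothesis $C^{-1} \leq h \leq C$.
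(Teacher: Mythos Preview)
Your overall strategy matches the paper's: establish $Q_1^K(u)=Q_2^K(hu)$ for smooth $K$ via the substitution $z=hu$ and integration by parts, then transfer to general $K$ by approximation. The smooth computation is correct.

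The step you flag as the ``main obstacle'' is, however, a genuine gap that your outline does not close. Two related points. First, the extension of $\int U^{ij}z_iz_j\,dS$ to Lipschitz $z$ is \emph{not} immediate from ``$\nabla z\in L^\infty$ and the measures are bounded Radon'': the matrix-valued measure $U^{ij}\,dS$ may be singular with respect to Lebesgue, so integrating the a.e.-defined quantity $z_iz_j$ against it requires justification. Second, the Chern--Levine--Nirenberg estimate of the preceding theorem controls dependence on the \emph{support function} $h$ for a \emph{fixed} $C^2$ test function; it does not directly yield continuity of the numerator of $Q_2^K$ in the test function $z$, which is what your density step needs.

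The paper resolves both issues with a single pluripotential identity:
\[
\sqrt{-1}\,\partial z\wedge\bar\partial z \;=\; \tfrac{1}{2}\bigl(\ddc z^2 - z\,\ddc z\bigr),
\]
together with the observation that when $z=hu$ (with $h$ convex, $|\nabla h|$ bounded, and $u\in C^2$), both $z$ and $z^2$ are quasi-plurisubharmonic. The CLN inequality then applies to the currents $\ddc z$ and $\ddc z^2$ wedged with powers of $\ddc h$, and delivers continuity of $\int_{S^n}U^{ij}z_iz_j\,dS$ under $L^\infty$ perturbations of $z$ within this class. This is precisely what permits replacing $z=hu$ by a nearby $\hat z\in C^2_e$ and concluding $\lambda_{1,e}(K)\leq\lambda_{1,e}(-L_K)+C\epsilon$. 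Your ``standard mollification plus CLN'' expectation is on the right track, but without this identity the argument is incomplete.
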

\begin{proof}
Fix $\epsilon>0$.
    Suppose $\lambda_{1, e}(-L_K)+\epsilon$ is obtained by a smooth even function $u$, such that 
    \[
    \int_{S^n}{(u-\underline u)^2} h\det(H)dS=1.
    \]
    Now fix $u$. Choose $\tilde h$ smooth, $\|\tilde h-h\|<\epsilon_{1}(\epsilon, u)$  and denote
    \[
    \tilde \lambda=\frac{\displaystyle \int_{S^n}\tilde H^{ij}\tilde h^2u_iu_j\det(\tilde H)dS}{\displaystyle\int_{S^n}{(u-\underline u)^2} \tilde h\det(\tilde H)dS}.
    \]
    We can choose $\epsilon_1$ sufficiently small such that 
    \[
   -C\epsilon\leq \lambda_{1, e}(-L_K)-\tilde \lambda \leq C\epsilon.
    \]
    Next take $\tilde{z}=\tilde h u$, then we get 
    \[
    \tilde \lambda=\frac{\displaystyle \int_{S^n}\tilde H^{ij}\tilde{z}_i\tilde{z}_j\det(\tilde H)dS-\int_{S^n}\tilde H^{ij}\delta_{ij}\tilde{z}^2 \det{\tilde H}dS+n\int_{S^n}\tilde{z}^2\tilde h^{-1}\det{\tilde H}dS}{\displaystyle\int_{S^n}\tilde{z}^2\tilde h^{-1}\det(\tilde H)dS-\left(\int_{S^n}\tilde h\det{\tilde H}dS\right)^{-1}\left(\int_{S^n}\tilde{z}\det{\tilde H}dS\right)^2}.
    \] 
Except the term $\displaystyle \int_{S^n}\tilde H^{ij}\tilde{z}_i\tilde{z}_j\det(\tilde H)dS$, all other terms in the above are approximated by the term replacing $\tilde h$ by $h$, $\tilde{z}$ by $z=hu$. For example, we have, for $\epsilon_1$ sufficiently small, 
\[
\left|\int_{S^n}\tilde H^{ij}\delta_{ij}\tilde{z}^2 \det{\tilde H}dS-\int_{S^n} H^{ij}\delta_{ij}z^2 \det{ H}dS\right|\leq C\epsilon.
\]
To see the convergence of the first term, we use the identity $\sqrt{-1}\partial z\wedge\bar{\partial}z=\frac{1}{2}(\ddc z^2-z\ddc z)$ to rewrite it as the sum of two parts:\[
\begin{aligned}
    \int_{S^n}\tilde{H}^{ij}\tilde{z}_i\tilde{z}_j\det(\tilde{H})dS=&\int_{M}f(r)\sqrt{-1}\partial r\wedge\bar{\partial}r\wedge \sqrt{-1}\partial \tilde{z}\wedge\bar{\partial}\tilde{z}(\sqrt{-1}\ddc \tilde{h})^{n-1}\\
    =&\int_{M}f(r)\sqrt{-1}\partial r\wedge\bar{\partial}r\wedge\frac{1}{2}(\ddc \tilde{z}^2-\tilde{z}\ddc \tilde{z})\wedge(\sqrt{-1}\ddc \tilde{h})^{n-1}\\
    =:&III+IV
\end{aligned}
\]
Since $\tilde{h}\rightarrow h$ in $L^\infty$, we know that $\tilde{z}\rightarrow hu$ in $L^\infty$. Here $u$ is $C^2$ function, and $h$ is a plurisubharmonic function. Since $h$ is uniformly bounded, and it's the support function of a convex body, we have $|D h|$ is also uniformly bounded. It follows that \[
\ddc(hu)=u\ddc h+\sqrt{-1}\partial u\wedge\bar{\partial}h+\sqrt{-1}\partial{h}\wedge\bar{\partial}u+h\ddc u\geq -C_0\beta,
\] for some constant $C_0$ depending on $||u||_{C^2},||h||_{L^\infty}$. It means that $hu$ is a quasiplurisubharmonic function. Now the proof of the convergence of the integral $III,IV$ is similar to the estimate of $I$ as before, and hence, we get \[
\int_{S^n}\tilde{H}^{ij}z_iz_j\det(\tilde{H})dS\rightarrow \int_{M}f(r)\sqrt{-1}\partial r\wedge\bar{\partial}r\wedge\frac{1}{2}(\ddc z^2-z\ddc z)\wedge(\sqrt{-1}\ddc h)^{n-1}.
\] Therefore, for any $C^2$-function $\hat{z}$ with $|\hat{z}-z|_{L^\infty}\leq \epsilon_2(\epsilon,u,h)$, by the Chern-Levine-Nirenberg inequality, we have \[\left|
\int_{S}H^{ij}\hat{z}_i\hat{z}_j\det HdS-\int_{M}f(r)\sqrt{-1}\partial r\wedge\bar{\partial}r\wedge\frac{1}{2}(\ddc z^2-z\ddc z)\wedge(\sqrt{-1}\ddc h)^{n-1}\right|\leq \epsilon
\] Again, all other terms in $\tilde{\lambda}$ can be approximated by replace $\tilde{z}$ by $\hat{z}$. 
It follows that \[
\tilde{\lambda}\geq \frac{\lambda_{1,e}(K)-C\epsilon}{1+C\epsilon}.
\]Hence, we get
$\lambda_{1,e}(K)\leq \lambda_{1,e}(-L_K)$. The other direction is similar, and this proves that $\lambda_{1,e}(-L_K)=\lambda_{1,e}(K).$
\end{proof}
Our definition $\lambda_{1, e}(-L_K)$, in particular the formulation \eqref{eigenvalue-003}, is essentially the same  with $\lambda^C$ introduced in \cite{Put, Milman22}. There are two other natural notions \cite{KM01, Milman22} to extend the definition to nonsmooth convex bodies. Let $K_i$ be  any sequence of convex bodies in $\cK^2_{+, e}$ converging to $K$, then one can introduce
\[
\underline{\lambda}_{1, e}(-L_K)=\liminf_{K_i\rightarrow K} \lambda_{1, e}(-L_{K_i}),\; \overline{\lambda}_{1, e}(-L_K)=\limsup_{K_i\rightarrow K} \lambda_{1, e}(-L_{K_i}).
\]
It seems to be a very nontrivial problem whether, for a nonsmooth convex body $K\in \cK_e$ and $n\geq 2$,
\[
\underline{\lambda}_{1, e}(-L_K)=\overline{\lambda}_{1, e}(-L_K)=\lambda_{1, e}(-L_K).
\]

Next, we use those notations to compute the second variational formula of $F$ when the convex body is nonsmooth. Recall that $$F_{d\mathfrak{m}}(h):=\frac{1}{p}\log \int_{S^n}h^pd\mathfrak{m}-\frac{1}{n+1}\log V(h).$$ Here $d\mathfrak{m}$ is a Radon measure, and $h=h_t=h(t)$ is a family of convex bodies, and suppose that $h(t):[-1,1]\rightarrow C^0(S^n)$ is a $C^2$ map. Indeed, our previous argument for the existence of a measure $U^{ij}u_iu_jdS$ for a nonsmooth convex body shows that we can express the integral of a function $g$ on $S^n$ as an integral on $\mathbb{R}^{n+1}\times\mathbb{T}^{n+1}$,\begin{lemma}
    Let $g$ be a homogeneous function of degree $0$ : $\mathbb{R}^{n+1}\rightarrow \mathbb{R}$, \[\int_{S^n}gdS_h=\int_{\mathbb{R}^{n+1}\times\mathbb{T}^{n+1}}\chi(r)g\sqrt{-1}\partial r\wedge\bar{\partial}r\wedge(\ddc h)^{n},
\]where in the second integral we extend $g$ trivially on each $\mathbb{T}^{n+1}$ fiber. 
\end{lemma}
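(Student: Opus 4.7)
The plan is to establish the identity first for smooth support functions by a direct pointwise computation in an orthonormal frame adapted to the radial direction, and then extend to general convex bodies by the same $L^\infty$ approximation scheme already deployed in Section 4 for the measure $\mathfrak{m}_{K,u}$. For smooth $h$, i.e.\ $K\in \cK^2_{+,e}$, I would work at a point $rx$ with $x\in S^n$ in the frame $e_1,\ldots,e_n$ tangent to $S^n$ and $e_{n+1}=\partial_r$, with adapted complex coordinates $z_\alpha=x_\alpha+\sqrt{-1}y_\alpha$. The block decomposition of the Euclidean Hessian
\[
D^2h(rx)=\frac{1}{r}\begin{pmatrix}(\nabla^2h+h\,\mathrm{Id})(x) & 0 \\ 0 & 0\end{pmatrix},
\]
already derived in Section 4, gives $4B_{\alpha\bar\beta}=\frac{1}{r}H_{ij}$ on the $n$-dimensional block and zero elsewhere, where $H=\nabla^2h+h\,\mathrm{Id}$. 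Since $r$ depends only on the radial coordinate, $\sqrt{-1}\partial r\wedge\bar\partial r=\tfrac{\sqrt{-1}}{4}\,dz_{n+1}\wedge d\bar z_{n+1}$ in this frame, and an elementary algebraic calculation yields the pointwise identity
\[
\sqrt{-1}\partial r\wedge\bar\partial r\wedge(\ddc h)^n\bigl|_{rx}=\frac{c_n}{r^n}\det(H)(x)\,dv,
\]
where $dv$ is the product Euclidean volume form on $\R^{n+1}\times\mathbb{T}^{n+1}$ and $c_n$ is a universal dimensional constant.

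The integration step is then routine. Passing to polar coordinates $dx=r^n\,dr\wedge dS$ and applying Fubini, the right-hand side of the lemma becomes
\[
c_n\,|\mathbb{T}^{n+1}|\left(\int_0^\infty\chi(r)\,dr\right)\int_{S^n}g\,\det(H)\,dS,
\]
and the same normalization convention that was invoked in Section 4 to force $c(n)=1$ makes the product of prefactors equal to one, giving precisely $\int_{S^n}g\,dS_h$. This settles the case $h\in C^2$. Both sides are manifestly independent of the choice of orthonormal frame, so the local identity patches to a global one.

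For the general case where $h$ is only the support function of a convex body in $\cK_e$, I would approximate $h$ by smooth even support functions $h_k\to h$ in $L^\infty$, for instance via infimal convolution with a smooth mollifier. The left-hand side $\int_{S^n}g\,dS_{h_k}$ converges to $\int_{S^n}g\,dS_h$ by the classical weak continuity of surface area measures under Hausdorff convergence of convex bodies. For the right-hand side I would replay the telescoping argument used in the first theorem of Section 4: writing
\[
(\ddc h_k)^n-(\ddc h)^n=\ddc(h_k-h)\wedge\sum_{i=0}^{n-1}(\ddc h_k)^i\wedge(\ddc h)^{n-1-i},
\]
integrating by parts against the compactly supported smooth test form $\chi(r)g\,\sqrt{-1}\partial r\wedge\bar\partial r$ (first mollifying $g$ if it is only continuous), and invoking the Chern--Levine--Nirenberg inequality bound the difference by $C\|h_k-h\|_{L^\infty}$ with $C$ depending only on $n$, $\|g\|_{C^2}$, and $\|h\|_{L^\infty}$; letting $k\to\infty$ yields the identity for $h$.

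The main obstacle is not any single step but the bookkeeping of the dimensional constants and the verification that the adapted-frame pointwise identity assembles into a coordinate-free equality of top-degree forms; this is ultimately transparent because both sides are invariant under orthogonal changes of frame on $\R^{n+1}$ and rotations of $\mathbb{T}^{n+1}$. Once the smooth case is pinned down, the approximation step is essentially a replay of the Bedford--Taylor-style continuity already in the paper, and the compactly supported cutoff $\chi(r)$ ensures that no boundary terms appear in the integrations by parts.
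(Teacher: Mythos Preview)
Your proposal is correct and follows essentially the same approach as the paper. The paper does not give a standalone proof of this lemma but instead remarks that it follows from ``our previous argument for the existence of a measure $U^{ij}u_iu_j\,dS$'': that is, the adapted-frame computation of $D^2h$ and $B$, the homogeneity reduction, and the Chern--Levine--Nirenberg / Bedford--Taylor-style $L^\infty$ continuity already established for \eqref{identity-001}; your write-up simply makes these steps explicit for the simpler integrand $g\,dS_h$ and is in full agreement with the paper's method and its handling of the dimensional constant via the torus-volume normalization.
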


Let's prove Aleksandrov's lemma first under our complex language. \[
\begin{aligned}
   \frac{V(h_t)-V(h_{t_0})}{t-t_0}=&\frac{1}{n+1}\int_{M}r^{-1}\chi(r)\frac{h_t-h_{t_0}}{t-t_0}(\ddc h_{t_0})^{n}\wedge\sqrt{-1}\partial{r}\wedge \bar{\partial} r\\
    &+
    \frac{1}{n+1}\int_{M}r^{-1}\chi(r)h_t\frac{(\ddc h_t)^n-(\ddc h_{t_0})^n}{t-t_0}\wedge\sqrt{
    -1}\partial{r}\wedge \bar{\partial} r\\
\end{aligned}
\]
Let $\Theta_t:=\sum_{k=1}^{n-1}(\ddc h_t)^k(\ddc h_{t_0})^{n-1-k}$, then the second term can be written as \[
\int_{M}r^{-1}\chi(r)\frac{h_t-h_{t_0}}{t-t_0}\ddc h_t\wedge\Theta_t\wedge\sqrt{-1}\partial{r}\wedge\bar{\partial}r.
\]
So if $\frac{h_t-h}{t-t_0}\rightarrow h_{t_0}'$ in $L^\infty$ as $t\rightarrow 0$, we get $\ddc h_t\wedge\Theta_t\rightarrow n(\ddc h_{t_0})^n$ as currents, which implies 
\[
\lim_{t\rightarrow t_0}\frac{V(h_t)-V(h_{t_0})}{t-t_0}=\int_{M}r^{-1}\chi(r)h_{t_0}'(\ddc h_{t_0})^n\wedge \sqrt{-1}\partial{r}\wedge \bar{\partial} r=\int_{S^n}h_{t_0}'dS_{h_{t_0}}.
\]

So the first variation of $F$ becomes \[
\frac{d}{dt}F(h_t)=\int_{S^n}h_t'\frac{h^{p-1}d\mathfrak{m}}{\int_{S^n}h^pd\mathfrak{m}}-\int_{S^n}h_t'\frac{dS_{h_t}}{(n+1)V(h_t)}=:F_1-F_2.
\]

Now, we examine the second variation. For the first term $F_1$, it's straightforward that \[
\frac{d}{dt}F_1=\int_{S^n}h_t''\frac{h^{p-1}d\mathfrak{m}}{\int_{S^n}h^pd\mathfrak{m}}+\frac{p-1}{\int_{S^n}h^pd\mathfrak{m}}\int_{S^n}h_t'^2h^{p-2}d\mathfrak{m}-p\frac{(\int_{S^n}h_t'h^{p-1}d\mathfrak{m})^2}{(\int_{S^n}h^pd\mathfrak{m})^2}.
\]
For the second term $F_2$:\[
\begin{aligned}
    \frac{F_2(h_t)-F_2(h_0)}{t}=&\int_Mh_t'r^{-1}\chi(r)(\ddc h_t)^n\wedge\sqrt{-1}\partial{r}\wedge\bar{\partial{r}}\frac{1}{t}\left(\frac{1}{(n+1)V(h_t)}-\frac{1}{(n+1)V(h_0)}\right)\\
    &+\frac{1}{(n+1)V(h_0)}\int_Mr^{-1}\chi
    (r)\frac{h_t'-h_0'}{t}(\ddc h_t)^n\wedge\sqrt{-1}\partial r\wedge\bar{\partial}r\\
    &+\frac{1}{(n+1)V(h_0)}\int_Mr^{-1}\chi(r)h_0'\frac{(\ddc h_t)^n-(\ddc h_0)^n}{t}\sqrt{-1}\partial r\wedge\bar{\partial}r\\
    &=(a)+(b)+(c)
\end{aligned}
\]It's easy to see that as $t\rightarrow 0$, $(a)\rightarrow \frac{1}{V^2(h_0)}\left(\int_{S^n}h_0'dS_{h_0}\right)^2$, and $(b)\rightarrow \frac{1}{(n+1)V(h_0)}\int_{S^n}h_0''dS_{h_0}$. 

We assume that $h_0'$ is a quasi-plurisubharmonic function(here, a quasi-plurisubharmonic function means the sum of it with a smooth function is plurisubharmonic), then integration by parts gives \[
\begin{aligned}
    (c)=&\frac{1}{(n+1)V(h_0)}\int_Mr^{-1}\chi(r)\frac{h_t-h_0}{t}\ddc h_0'\wedge \Theta_t\wedge\sqrt{-1}\partial r\wedge\bar{\partial }r.
\end{aligned}
\] Since $\ddc h_0'\wedge \Theta_t\rightarrow \ddc h_0'\wedge(\ddc h_0)^{n-1}$, we get \[(c)\rightarrow\frac{1}{(n+1)V(h_0)} \int_Mnr^{-1}\chi(r)h_0'\ddc h_0'\wedge(\ddc h_0)^{n-1}\wedge\sqrt{-1}\partial r\wedge\bar{\partial}r \quad \text{as current when }t\rightarrow 0.\] 

Recall that a nonsmooth convex body with support function $h$ is a critical point of $F$ if \[\int_{S^n}g
\frac{h^{p-1}d\mathfrak{m}}{\int_{S^n}h^pd\mathfrak{m}}=\int_{S^n}g\frac{dS_{h}}{(n+1)V(h)},\forall g\in C^0(S^n).
\]
So if $h_0$ is a critical point of $F$, we have \[
\int_{S^n}h_0''\frac{h_0^{p-1}d\mathfrak{m}}{\int_{S^n}h_0^pd\mathfrak{m}}=\int
_{S^n}h_0''\frac{dS_{h_0}}{(n+1)V(h_0)},
\]
and \[\frac{p-1}{\int_{S^n}h_0^pd\mathfrak{m}}
\int_Mh_0'^2h_0^{p-2}d\mathfrak{m}=\frac{p-1}{(n+1)V(h_0)}\int_{S^n}h_0'^2h_0^{-1}dS_{h_0}.
\]
It follows \[
\begin{aligned}
 \frac{d^2}{dt^2}|_{t=0}F(h_t)=& \frac{p-1}{(n+1)V(h_0)}\int_{S^n}h_0'^2h_0^{-1}dS_{h_0} +\frac{(n+1-p)}{(n+1)V(h_0)^2}\left(\int_{S^n}h_0'dS_{h_0}\right)^2\\
 &-\frac{1}{(n+1)V(h_0)}\int_Mnh_0'r^{-1}\chi(r)(\ddc h_0')\wedge(\ddc h_0)^{n-1}\wedge\sqrt{-1}\partial r\wedge\bar{\partial} r
\end{aligned}
\]

Assume that $h_0'=h_0z$, and $z$ is a bounded quasi-plurisubharmonic function with bounded gradient. Then $h_0'$ is a quasi-plurisubharmonic function.  We claim that for any smooth function $f(r)$, \[\begin{aligned}
    &\int_{M}f(r)h_0z\ddc (h_0z)\wedge(\ddc h_0)^{n-1}\sqrt{-1}\partial r\wedge\bar{\partial r}\\
    =&\int_Mf(r)h_0^2\frac{1}{2}(\ddc z^2-z\ddc z)\wedge(\ddc h_0)^{n-1}\sqrt{-1}\partial r\wedge\bar{\partial r}\\
    &+\int_M f(r)h_0z^2(\ddc h_0)^{n-1}\sqrt{-1}\partial r\wedge\bar{\partial r}.
\end{aligned}
\]The equality is straightforward when both $h_0$ and $z$ is smooth, and the quasi-plurisubharmonic case is just a consequence of approximation using Chern-Levine-Nirenberg inequality as previously. Later, we just write $2\sqrt{-1}\partial z\wedge\bar{\partial z}=\ddc z^2+z\ddc z$ when $z$ is only quasi-plurisubharmonic, and similarly, we define the Radom measure $U^{ij}z_iz_jdS$ with this understanding. Hence  \[\begin{aligned}
    &\int_M nr^{-1}\chi(r)h_0'(\ddc h_0')(\ddc h_0)^{n-1}\sqrt{-1}\partial r\wedge \bar{\partial}r\\
=&\int_{S^n}H_0^{ij}z_iz_jh_0^2dS_{h_0}+n\int_{S^n}z^2h_0dS_{h_0}
\end{aligned} \]

So with those notations,  one gets:\begin{prop}
    Let $h_t:[0,1]\to C(S^n)$ be a $C^2$-family of support functions of convex bodies such that $h_0'=h_0z$ for some quasi-plurisubharmonic function $z$ with bounded gradient, and assume that $h_0$ is a critical point of $F_{d\mathfrak{m}}(h)$ for a Radon measure $d\mathfrak{m}$ on $S^n$, then \begin{equation}
\begin{aligned}\label{nonsmoothsvf}
    \frac{d^2}{dt^2}|_{t=0}F&=\frac{p-1-n}{(n+1)V(h_0)}\int_{S^n}z^2h_0dS_{h_0}+\frac{(n+1-p)}{(n+1)V(h_0)^2}\left(\int_{S^n}h_0'dS_{h_0}\right)^2\\
    &-\frac{1}{(n+1)V(h)}\int_{S^n}H_0^{ij}z_iz_jh_0^2dS_{h_0}.
\end{aligned}
\end{equation}
\end{prop}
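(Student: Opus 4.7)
The plan is to obtain the formula by differentiating $F_{d\mathfrak{m}}(h_t)$ twice in $t$ and then invoking the critical point equation at $t=0$ to cancel the $h_0''$-contributions and combine the remaining terms. The first variation was derived just above the statement as $\frac{d}{dt}F_{d\mathfrak{m}}(h_t)=\int_{S^n}h_t'\frac{h_t^{p-1}d\mathfrak{m}}{\int h_t^p d\mathfrak{m}}-\int_{S^n}h_t'\frac{dS_{h_t}}{(n+1)V(h_t)}=:F_1(t)-F_2(t)$, using the complex-geometric Aleksandrov lemma $\frac{d}{dt}V(h_t)=\int_{S^n}h_t'\,dS_{h_t}$. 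Since the measure $d\mathfrak{m}$ is fixed, differentiating $F_1$ a second time at $t=0$ is a direct calculation producing one $h_0''$-term, a $(p-1)$-term involving $(h_0')^2 h_0^{p-2}$, and a $-p$-term involving $(\int h_0' h_0^{p-1}d\mathfrak{m})^2$.

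The substantive step is differentiating $F_2$ when $h_0$ is only a continuous support function. I would split the difference quotient $(F_2(h_t)-F_2(h_0))/t$ into the three pieces labelled $(a)$, $(b)$, $(c)$, corresponding to the $t$-dependence of $1/V(h_t)$, of $h_t'$, and of $(\ddc h_t)^n$ respectively. Pieces $(a)$ and $(b)$ converge using only the weak continuity of $dS_{h_t}$ in the $L^\infty$ topology of support functions, which was established earlier in the paper via the Chern--Levine--Nirenberg inequality. The subtle piece is $(c)$: here I would write $(\ddc h_t)^n-(\ddc h_0)^n=\ddc(h_t-h_0)\wedge\Theta_t$ with $\Theta_t=\sum_{k=0}^{n-1}(\ddc h_t)^k\wedge(\ddc h_0)^{n-1-k}$ and integrate by parts to move $\ddc$ from $h_t-h_0$ onto $h_0'$. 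This is precisely where the quasi-plurisubharmonicity of $h_0'=h_0 z$ is needed: it ensures $\ddc h_0'$ is a well-defined current of order zero against which $\Theta_t\wedge \sqrt{-1}\partial r\wedge\bar\partial r$ may be paired, and $\Theta_t$ can then be let converge weakly to $n(\ddc h_0)^{n-1}$.

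Passing to the limit $t\to 0$ yields an expression for $F_2'(0)$ consisting of an $h_0''$-term, a $-(\int h_0'\,dS_{h_0})^2/((n+1)V(h_0)^2)$-term, and the current integral $\frac{1}{(n+1)V(h_0)}\int_M n r^{-1}\chi(r) h_0'\,\ddc h_0'\wedge(\ddc h_0)^{n-1}\wedge\sqrt{-1}\partial r\wedge\bar\partial r$. Applying the critical point identity $\int_{S^n} g\,\frac{h_0^{p-1}d\mathfrak{m}}{\int h_0^p d\mathfrak{m}}=\int_{S^n}g\,\frac{dS_{h_0}}{(n+1)V(h_0)}$ with $g=h_0''$ cancels the $h_0''$-terms between $F_1'(0)$ and $F_2'(0)$, and with $g=(h_0')^2/h_0$ rewrites the $(p-1)$-contribution from $F_1'(0)$ as $\frac{p-1}{(n+1)V(h_0)}\int z^2 h_0\,dS_{h_0}$. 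Finally, substituting $h_0'=h_0 z$ and invoking the identity $\int_M n r^{-1}\chi(r)h_0'\,\ddc h_0'\wedge(\ddc h_0)^{n-1}\wedge\sqrt{-1}\partial r\wedge\bar\partial r = \int_{S^n}H_0^{ij}z_i z_j h_0^2\,dS_{h_0}+n\int_{S^n}z^2 h_0\,dS_{h_0}$, interpreted via the polarization $2\sqrt{-1}\partial z\wedge\bar\partial z=\ddc z^2-z\ddc z$ appropriate for quasi-plurisubharmonic $z$, combines the $(p-1)$ and $-n$ coefficients into the announced $(p-1-n)$. The main obstacle is justifying the integration by parts and current-convergence steps on merely continuous support functions; once the weak convergence machinery of the preceding section is in hand, everything else is algebraic bookkeeping.
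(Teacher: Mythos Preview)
Your proposal is correct and follows the paper's argument essentially step for step: the same $F_1-F_2$ decomposition, the same three-piece splitting $(a)+(b)+(c)$ of the difference quotient for $F_2$, the same integration by parts on $(c)$ via $\ddc(h_t-h_0)\wedge\Theta_t$ relying on the quasi-plurisubharmonicity of $h_0'$, and the same use of the critical-point identity together with the current identity $\int_M n r^{-1}\chi(r)h_0'\ddc h_0'\wedge(\ddc h_0)^{n-1}\wedge\sqrt{-1}\partial r\wedge\bar\partial r=\int_{S^n}H_0^{ij}z_iz_jh_0^2\,dS_{h_0}+n\int_{S^n}z^2h_0\,dS_{h_0}$. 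The only item you leave implicit is that the critical-point equation is also applied with $g=h_0'$ to convert the $-p\bigl(\int h_0'h_0^{p-1}d\mathfrak m\bigr)^2/(\int h_0^p d\mathfrak m)^2$ term into the corresponding $dS_{h_0}$-expression, which then combines with the $(a)$-limit to produce the coefficient $(n+1-p)/((n+1)V(h_0)^2)$.
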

Here, a $C^2$-family of support functions means \[
\lim_{s\rightarrow0}\frac{h_{t+s}-h_t}{s}\to h'_t  \quad \text{and} \quad \lim_{s\rightarrow0}\frac{h_{t+s}'-h_t'}{s}\to  h_t'' \quad \text{in } L^\infty\]

\end{document}